\numberwithin{equation}{section}
\newtheorem{thm}{Theorem}[section]
\newtheorem{cor}{Corollary}[section]
\newtheorem{lem}{Lemma}[section]
\newtheorem{prop}{Proposition}[section]
\newtheorem{ass}{Assumption}[section]
\theoremstyle{plain}
\theoremstyle{remark}
\newtheorem{rem}{Remark}[section]
\def\R{{\mathbb R}}
\def\N{{\mathbb N}}
\def\Z{{\mathbb Z}}
\def\F{\mathcal{F}}
\def\sgn{{\rm sign}}
\def\ve{\varepsilon}
\begin{document}

\begin{frontmatter}

\title{Weak convergences of marked empirical processes in a Hilbert space and their applications}

\runtitle{Marked empirical processes}

\begin{aug}

\author{\fnms{Koji} \snm{Tsukuda}\thanksref{a}\ead[label=e1]{ctsukuda@g.ecc.u-tokyo.ac.jp}}
\and
\author{\fnms{Yoichi} \snm{Nishiyama}\thanksref{b}\ead[label=e2]{nishiyama@waseda.jp}}
\address[a]{Graduate School of Arts and Sciences, the University of Tokyo, 3-8-1 Komaba, Meguro-ku, Tokyo 153-8902, Japan. \printead{e1}}
\address[b]{Faculty of International Research and Education, Waseda University,
1-6-1 Nishi-Waseda, Shinjuku-ku, Tokyo 169-8050, Japan. \printead{e2}}

\runauthor{K. Tsukuda and  Y. Nishiyama}

\affiliation{The University of Tokyo and Waseda University}

\end{aug}

\begin{abstract}
In this paper, weak convergences of marked empirical processes in $L^2(\mathbb{R},\nu)$ and their applications to statistical goodness-of-fit tests are provided, where $L^2(\mathbb{R},\nu)$ is the set of equivalence classes of the square integrable functions on $\mathbb{R}$ with respect to a finite Borel measure $\nu$. 
The results obtained in our framework of weak convergences are, in the topological sense, weaker than those in the Skorokhod topology on a space of c\'adl\'ag functions or the uniform topology on a space of bounded functions, which have been well studied in previous works.
However, our results have the following merits: (1) avoiding conditions which do not suit for our purpose; (2) treating a weight function which makes us possible to propose an Anderson--Darling type test statistics for goodness-of-fit tests. 
Indeed, the applications presented in this paper are novel.
\end{abstract}

\begin{keyword}
\kwd{weak convergence in Hilbert space}
\kwd{martingale}
\kwd{goodness-of-fit test}
\end{keyword}

\end{frontmatter}

\section{Introduction and main results}\label{sec:1}
This paper deals with the weak convergence of a certain sequence of marked empirical processes in $L^2$ space. 
Let us begin with preparing a minimal set of notations to describe our main theorems and the scientific background around our results.  
Let $\nu$ be a finite Borel measure on $\R$, and $L^2(\R,\nu)$ the set of equivalence classes of the square integrable functions on $\R$ with respect to $\nu$.
As for $L^2(\R,\nu)$, a inner product $\langle\cdot,\cdot\rangle$ defined by $\langle f,g \rangle = \int_\R f(x)g(x) \nu(dx)$ for $f,g\in L^2(\R,\nu)$ and a norm $\| \cdot\|$ defined by $\| f \| = \langle f,f \rangle^{1/2}$ for $f\in L^2(\R,\nu)$ are equipped.
For an interval $A$, the function $1_A(\cdot)$ is defined by $1_A(x)=1 \ (x\in A), \ 0 \ (x\not\in A)$.

For every positive integer $n$, let us introduce a filtered probability space $(\Omega^n,\F^n,{\bf F}^n =\{\F_i^n\}_{i\geq 0},P^n)$.
Let $\{ X_i^n \}_{i\geq 0}$ be a real-valued ${\bf F}^n$-adapted sequence and $\{m_i^n\}_{i\geq 1}$ a real valued ${\bf F}^n$-adapted martingale difference sequence (thus for every $i$, $m^n_i$ is $\F^n_i$-measurable and $E^n[m^n_i|\F^n_{i-1}]=0$ almost surely).
In this paper, we show weak convergences in $L^2(\R,\nu)$ of an empirical process marked by the martingale difference sequence $\{m_i^n \}_{i\geq1}$
\[
x \leadsto Z_n(x) = \sum_{i=1}^n 1_{(-\infty,x]}(X_{i-1}^n) m_i^n 
\]
and its weighted process with weight function $x\mapsto w(x)(>0)$
\[
x \leadsto Z^w_n(x) = w(x) Z_n(x) = \sum_{i=1}^n w(x) 1_{(-\infty,x]}(X_{i-1}^n) m_i^n,
\]
to  Gaussian processes $G$ and $G^w$, respectively.
The limits are 
\[ x \leadsto G(x) = B(\Psi(x)), \]
and
\[ x\leadsto G^w (x)  = w(x) B(\Psi(x)), \]
where $x\leadsto B(x)$ is a standard Brownian motion and $\Psi$ is the limit (for the exact sense of the limit, see Assumptions \ref{AssN} and \ref{Ass2} below) of 
\[ \sum_{i=1}^n 1_{(-\infty,x]}(X_{i-1}) E^n [(m^n_i)^2|\F^n_{i-1}]. \]

First, we provide a sufficient condition to show the weak convergence of $Z_n$.

\begin{ass}\label{AssN}
(i) It holds that
\begin{equation} 
\sum_{i=1}^n 1_{(-\infty,x]}(X_{i-1}^n) E^n[(m_i^n)^2 |\F_{i-1}^n] \to^p \Psi(x) \quad (\forall x\in \R) 
\label{AN1}
\end{equation}
as $n\to\infty$, where $x\mapsto \Psi(x)$ is a continuous nondecreasing function on $\R$ satisfying $\Psi(x)\downarrow0$ as $x\to-\infty$ and $\Psi(x)\uparrow \Psi(\infty)<\infty$ as $x\to\infty$, and $\to^p$ denotes the convergence in probability.

(ii) There exists a constant $\delta>0$ such that
\[ \sum_{i=1}^n E^n[|m_i^n|^{2+\delta} |\F_{i-1}^n] \to^p 0 \]
as $n\to\infty$.

(iii) There exists a measurable function $\phi$ on $\R$ such that for every $n\in \N$ and $i=1,\ldots,n$ there exist some nonnegative constants $c_i^n$ such that 
$\sup_n \sum_{i=1}^n c_i^n < \infty$
and that
$E^n[(m_i^n)^2|\F_{i-1}^n] \leq c_i^n \phi(X_{i-1}^n)$ almost surely.

(iv) All $X_i^n$'s have the same distribution as $\zeta$ such that $E[\phi(\zeta)]<\infty$.
\end{ass}

The first goal of this paper is to show the following theorem which asserts the weak convergence of $Z_n$ under Assumption \ref{AssN}.
Its proof will be presented in Section \ref{sec:2}. 

\begin{thm}\label{mainN}
Under Assumption \ref{AssN}, $Z_n$ converges weakly to $G$ in $L^2(\R,\nu)$ as $n\to\infty$.
\end{thm}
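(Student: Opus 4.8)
The plan is to exploit separability of $L^2(\R,\nu)$ and reduce the claim to the two standard ingredients for weak convergence in a separable Hilbert space: convergence of the finite-dimensional distributions (the joint laws of the inner products against a fixed complete orthonormal system $\{e_k\}_{k\ge1}$) together with tightness. The computation underlying everything is that for any $h\in L^2(\R,\nu)$, writing $H(y)=\langle h,1_{[y,\infty)}\rangle=\int_{[y,\infty)}h\,d\nu$ and noting $1_{(-\infty,x]}(X_{i-1}^n)=1_{[X_{i-1}^n,\infty)}(x)$, one has
\[ \langle Z_n,h\rangle=\sum_{i=1}^n m_i^n\,H(X_{i-1}^n), \]
a sum of martingale differences for the filtration ${\bf F}^n$ (each $H(X_{i-1}^n)$ is $\F_{i-1}^n$-measurable). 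Moreover $|H(y)|\le\|h\|\,\nu(\R)^{1/2}$, so $H$ is bounded; this boundedness is used repeatedly.

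For the finite-dimensional distributions I would apply the Cram\'er--Wold device, so that it suffices to show $\langle Z_n,h\rangle\to^d N(0,v)$ for each single $h$, with $v=\int H^2\,d\Psi$. This is a martingale central limit theorem. Introduce the random measure $\mu_n(A)=\sum_{i=1}^n 1_A(X_{i-1}^n)E^n[(m_i^n)^2|\F_{i-1}^n]$, so that the predictable quadratic variation of $\langle Z_n,h\rangle$ equals $\int H^2\,d\mu_n$. The conditional Lyapunov condition reduces, via $|H|\le\|h\|\nu(\R)^{1/2}$, to Assumption \ref{AssN}(ii); and the variance identity $\int H^2\,d\Psi=E[\langle G,h\rangle^2]$ follows from Fubini together with $\Psi(x\wedge x')=\Psi(x)\wedge\Psi(x')$, which also reproduces all the limiting covariances needed after Cram\'er--Wold.

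The main obstacle is upgrading Assumption \ref{AssN}(i). That hypothesis only states $\mu_n((-\infty,x])\to^p\Psi(x)$, i.e., convergence tested against indicators, whereas the martingale CLT requires $\int H^2\,d\mu_n\to^p\int H^2\,d\Psi$. I would prove this by approximating $H$ by step functions $\sum_j a_j1_{(-\infty,x_j]}$: for these the products of indicators are again indicators of intersections, so $\int(\cdot)^2\,d\mu_n$ is a finite linear combination of terms $\mu_n((-\infty,x_j\wedge x_l])$ and Assumption \ref{AssN}(i) applies directly. The approximation error is controlled uniformly in $n$ using the domination that Assumptions \ref{AssN}(iii)--(iv) provide: since $E^n[(m_i^n)^2|\F_{i-1}^n]\le c_i^n\phi(X_{i-1}^n)$ and each $X_{i-1}^n$ is distributed as $\zeta$,
\[ E[\mu_n(A)]\le\Big(\sup_n\sum_{i=1}^n c_i^n\Big)E[1_A(\zeta)\phi(\zeta)]=:C\,\Lambda(A), \]
where $\Lambda$ is a fixed finite measure because $E[\phi(\zeta)]<\infty$. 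Thus $E[\mu_n]\le C\Lambda$ as measures, uniformly in $n$, which absorbs the remainder terms.

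Finally, for tightness I would verify the tail (flatness) criterion $\lim_{K\to\infty}\limsup_n E[\sum_{k>K}\langle Z_n,e_k\rangle^2]=0$. By the orthogonality of martingale differences, $E[\langle Z_n,e_k\rangle^2]=\int E_k^2\,dE[\mu_n]$ with $E_k(y)=\langle e_k,1_{[y,\infty)}\rangle$, and Parseval gives the key identity
\[ \sum_{k\ge1}E_k(y)^2=\|1_{[y,\infty)}\|^2=\nu([y,\infty))\le\nu(\R). \]
Hence $r_K(y):=\sum_{k>K}E_k(y)^2$ decreases to $0$ pointwise and is dominated by $\nu(\R)$. Using the same bound $E[\mu_n]\le C\Lambda$, one gets $E[\sum_{k>K}\langle Z_n,e_k\rangle^2]=\int r_K\,dE[\mu_n]\le C\int r_K\,d\Lambda$, and since $\Lambda$ is finite the right-hand side tends to $0$ as $K\to\infty$ by dominated convergence, uniformly in $n$. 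Together with the convergence of the finite-dimensional distributions this yields weak convergence of $Z_n$ to $G$ in $L^2(\R,\nu)$; that $G$ is itself a genuine $L^2(\R,\nu)$-valued random element follows from $E\|G\|^2=\int\Psi\,d\nu<\infty$.
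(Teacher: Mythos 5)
Your proposal is correct, and its overall architecture is the paper's: weak convergence in the separable Hilbert space is reduced to convergence of the inner products $\langle Z_n,h\rangle$ via a martingale CLT with a Lyapunov condition (the paper's Lemma \ref{MarN}) together with the tail-flatness condition $\lim_{K\to\infty}\limsup_n E^n[\sum_{k>K}\langle Z_n,e_k\rangle^2]=0$ (the paper's Lemma \ref{TightN}); your Lyapunov bound via $|H|\le\|h\|\,\nu(\R)^{1/2}$, the covariance identity through $\Psi(x\wedge y)=\Psi(x)\wedge\Psi(y)$, and the tightness estimate are all the same in substance --- in particular your Parseval computation $\sum_k E_k(y)^2=\nu([y,\infty))$ combined with $E[\mu_n(A)]\le(\sup_n\sum_i c_i^n)\,E[1_A(\zeta)\phi(\zeta)]$ is exactly the bound the paper obtains through the auxiliary $L^2$-valued variable $\eta$. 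The one genuinely different step is the passage from Assumption \ref{AssN}(i) to convergence of the predictable quadratic variation $\int H^2\,d\mu_n\to^p\int H^2\,d\Psi$. The paper upgrades the pointwise convergence $R_n(x)\to^p 0$ to $\sup_{x\in\R}|R_n(x)|\to^p0$ by a Polya-type argument (monotonicity of $x\mapsto\mu_n((-\infty,x])$ and continuity of $\Psi$) and then integrates the remainder against $|h(x)h(y)|\,\nu(dx)\nu(dy)$; this keeps Lemma \ref{MarN} dependent only on parts (i)--(ii). You instead approximate $H$ by finite combinations of indicators $1_{(-\infty,x_j]}$ and absorb the approximation error through the uniform domination $E[\mu_n]\le C\,E[1_{\cdot}(\zeta)\phi(\zeta)]$, which imports parts (iii)--(iv) into the finite-dimensional step but entirely avoids the uniform-convergence claim --- a claim whose verification near $x=+\infty$ requires control of the total mass $\mu_n(\R)$, which your dominating measure supplies for free. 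For completeness you should record the two routine facts that close the three-epsilon argument: $H$ is a difference of bounded monotone functions and hence approximable in $L^2$ of any finite Borel measure by such step functions, and the same grid works on the limit side either because $d\Psi\le C\,E[1_{\cdot}(\zeta)\phi(\zeta)]$ (Fatou along an a.s.\ convergent subsequence) or simply by bounded convergence since $\Psi$ is continuous. With those remarks your argument is complete and fully equivalent in strength to the paper's.
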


\begin{rem}
An important point of Theorem\ref{mainN} is that we avoid the assumption (B) in Lemma 3.1 of \cite{RefKS} which makes a restriction on the transition density of a discrete time Markovian process and does not suit for our diffusion process model considered in Section \ref{sec:4}. 
Although \cite{RefE} gave a result for a non-Markovian process, he assumed a condition on the smoothness (the condition (D) in his Theorem 1) of the model which also does not fit in our purpose. 
However, notice that our result does not cover theirs because they considered the weak convergence under the uniform metric. 
\end{rem}

Next, we provide a sufficient condition to show the weak convergence of $Z^w_n$.
Obviously, if we set $w(\cdot)=1$, then $Z^w_n$ becomes $Z_n$.
However, Theorem \ref{mainN} was separately stated, because \eqref{assE} is stronger than \eqref{AN1}.

\begin{ass}\label{Ass2}
(i)  It holds that
\begin{equation}
E^n \left[ \left| \sum_{i=1}^n 1_{(-\infty,x]}(X_{i-1}^n) E^n[(m_i^n)^2 |\F_{i-1}^n] - \Psi(x) \right| \right] \to 0  \quad (\forall x\in \R) \label{assE}
\end{equation}
as $n\to\infty$, where $x\mapsto \Psi(x)$ is a continuous nondecreasing function on $\R$ satisfying $\Psi(x)\downarrow0$ as $x\to-\infty$, $\Psi(x)\uparrow \Psi(\infty)<\infty$ as $x\to\infty$.
Moreover, there exists a nondecreasing function $\Phi$ such that 
\begin{equation}
 I_n(x) \leq \Phi(x) \quad (\forall x\in \R) 
 \label{assE2}
\end{equation}
for all sufficiently large $n$, where
\[ I_n(\cdot) = \sum_{i=1}^n E^n \left[ 1_{(-\infty,\cdot]}(X_{i-1}^n) (m_i^n)^2  \right] . \]
Furthermore, it holds that
\[ \int_\R (\Psi(x)+\Phi(x)) (w(x))^2\nu(dx)  < \infty. \]

(ii) There exists a constant $\delta>0$ such that
\[
\sum_{i=1}^n E^n[1_{( -\infty, x ]}(X_{i-1}^n) |m_i^n|^{2+\delta}] \to 0 \quad (\forall x\in \R)  
\]
as $n\to\infty$, and there exists  a function $\Lambda$ such that 
\begin{equation}\label{assL1}
 \sum_{i=1}^n E^n[1_{( -\infty, x ]}(X_{i-1}^n) |m_i^n|^{2+\delta}] \leq \Lambda(x) \quad (\forall x\in \R) 
\end{equation}
for all sufficiently large $n$ and that 
\[ \int_\R \Lambda(x) (w(x))^{2+\delta} \nu(dx) < \infty .  \]
\end{ass}

The second goal of this paper is to show the following theorem which asserts the weak convergence of $Z_n^w$ under Assumption \ref{Ass2}.
Its proof will be presented in Section \ref{sec:3}. 
From the practical viewpoint, the case where $w=(\Psi)^{-1/2}$ is important since it corresponds to the standardization.

\begin{thm}\label{main}
Under Assumption \ref{Ass2}, $Z_n^w$ converges weakly to $G^w$ in $L^2(\R,\nu)$ as $n\to\infty$.
\end{thm}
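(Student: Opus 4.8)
The plan is to establish weak convergence in the separable Hilbert space $L^2(\R,\nu)$ by combining asymptotic tightness with convergence of the one-dimensional marginals along a fixed orthonormal basis, reducing both to martingale moment computations. Fix once and for all an orthonormal basis $\{e_k\}_{k\geq1}$ of $L^2(\R,\nu)$ whose elements are all \emph{bounded}; such a basis exists because $L^2(\R,\nu)$ is separable and bounded functions are dense, and the boundedness will be needed only in the Lindeberg step. The computational workhorse is the following identity, obtained by writing $\langle Z_n^w,f\rangle=\sum_{i=1}^n m_i^n H_i^f$ with $H_i^f=\int_{[X_{i-1}^n,\infty)}w(x)f(x)\,\nu(dx)$, expanding the product, and using the martingale difference property to kill all cross terms: for $f,g\in L^2(\R,\nu)$,
\[
 E^n[\langle Z_n^w,f\rangle\langle Z_n^w,g\rangle]
 =\int_\R\int_\R w(x)w(y)f(x)g(y)\,I_n(x\wedge y)\,\nu(dx)\nu(dy).
\]
In particular $E^n\|Z_n^w\|^2=\int_\R (w(x))^2 I_n(x)\,\nu(dx)$; since \eqref{assE} forces $I_n(x)\to\Psi(x)$ for every $x$ while \eqref{assE2} supplies the domination $I_n\leq\Phi$ with $\int_\R(w(x))^2\Phi(x)\,\nu(dx)<\infty$, dominated convergence yields $E^n\|Z_n^w\|^2\to\int_\R(w(x))^2\Psi(x)\,\nu(dx)=E\|G^w\|^2<\infty$, and for all sufficiently large $n$ one has $E^n\|Z_n^w\|^2\le\int_\R(w(x))^2\Phi(x)\,\nu(dx)<\infty$ (so that $Z_n^w\in L^2(\R,\nu)$ almost surely for all large $n$).

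For tightness I use the Hilbert-space criterion: with $P_m$ the orthogonal projection onto $\mathrm{span}(e_1,\dots,e_m)$, asymptotic tightness of $\{Z_n^w\}$ follows once $\sup_{n}E^n\|Z_n^w\|^2<\infty$ (for large $n$) and $\lim_{m\to\infty}\limsup_{n\to\infty}E^n\|(I-P_m)Z_n^w\|^2=0$. By Parseval, $E^n\|(I-P_m)Z_n^w\|^2=E^n\|Z_n^w\|^2-\sum_{k\le m}E^n[\langle Z_n^w,e_k\rangle^2]$, and applying the displayed identity termwise gives $\limsup_{n}E^n\|(I-P_m)Z_n^w\|^2=E\|(I-P_m)G^w\|^2$, which tends to $0$ as $m\to\infty$ since $E\|G^w\|^2<\infty$. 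The dominated-convergence exchange in each term is licensed by the pointwise bound $|I_n(x\wedge y)-\Psi(x\wedge y)|\le(\Phi+\Psi)(x\wedge y)\le\sqrt{(\Phi+\Psi)(x)\,(\Phi+\Psi)(y)}$ (monotonicity of $\Phi+\Psi$) together with $\int_\R(w(x))^2(\Phi+\Psi)(x)\,\nu(dx)<\infty$ and the Cauchy--Schwarz inequality.

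For the finite-dimensional distributions I fix a finite linear combination $g=\sum_{k\le m}a_k e_k$, which is bounded, and show $\langle Z_n^w,g\rangle=\sum_{i=1}^n m_i^n H_i^g\to^d N(0,v)$ with $v=\int_\R\int_\R w(x)w(y)g(x)g(y)\Psi(x\wedge y)\,\nu(dx)\nu(dy)=\mathrm{Var}(\langle G^w,g\rangle)$, by the martingale central limit theorem applied to the triangular array $\{m_i^n H_i^g\}$. Its predictable quadratic variation equals $\int_\R\int_\R w(x)w(y)g(x)g(y)S_n(x\wedge y)\,\nu(dx)\nu(dy)$, where $S_n(u)=\sum_{i=1}^n 1_{(-\infty,u]}(X_{i-1}^n)E^n[(m_i^n)^2|\F_{i-1}^n]$, and the same domination argument (now in $L^1$, invoking \eqref{assE} which gives $E^n|S_n(u)-\Psi(u)|\to0$) shows it converges in probability to $v$. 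Cramér--Wold then upgrades this to joint convergence of $(\langle Z_n^w,e_1\rangle,\dots,\langle Z_n^w,e_m\rangle)$ to the Gaussian vector $(\langle G^w,e_1\rangle,\dots,\langle G^w,e_m\rangle)$, i.e.\ convergence of all finite-dimensional distributions along the basis. Combined with the tightness above, the standard characterization of weak convergence in a separable Hilbert space yields $Z_n^w\to^d G^w$ in $L^2(\R,\nu)$.

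I expect the Lindeberg (Lyapunov) condition for the array $\{m_i^n H_i^g\}$ to be the main obstacle, precisely because the weight $w$, and hence $H_i^g$, may be unbounded. The decisive estimate is the H\"older bound
\[
 |H_i^g|^{2+\delta}=\Big|\int_{[X_{i-1}^n,\infty)}w g\,\nu\Big|^{2+\delta}
 \le\nu(\R)^{1+\delta}\int_{[X_{i-1}^n,\infty)}|w g|^{2+\delta}\,\nu ,
\]
after which summing the Lyapunov quantities and using Fubini converts $\sum_{i=1}^n E^n[|m_i^n H_i^g|^{2+\delta}]$ into $\nu(\R)^{1+\delta}\int_\R |w(x)g(x)|^{2+\delta}\,J_n(x)\,\nu(dx)$, with $J_n$ the left-hand side of \eqref{assL1}. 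Here the boundedness of $g$ and the integrability $\int_\R\Lambda(x)(w(x))^{2+\delta}\,\nu(dx)<\infty$ dominate the integrand by $\|g\|_\infty^{2+\delta}\Lambda\,(w)^{2+\delta}$, while $J_n\to0$ pointwise, so dominated convergence forces the Lyapunov sum to $0$. This is exactly the role of part (ii) of Assumption \ref{Ass2}, and confirming that every Fubini/dominated-convergence exchange throughout the argument is underwritten by the domination hypotheses $\Phi$, $\Lambda$ and the weighted integrability conditions of Assumption \ref{Ass2} is the delicate bookkeeping at the heart of the proof.
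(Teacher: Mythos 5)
Your proposal is correct and follows essentially the same route as the paper: the Hilbert-space tightness criterion, controlled through the identity $E^n[\langle Z_n^w,f\rangle\langle Z_n^w,g\rangle]=\int_\R\int_\R w(x)w(y)f(x)g(y)I_n(x\wedge y)\,\nu(dx)\nu(dy)$ together with the dominations $I_n\le\Phi$ and $\Phi(x\wedge y)\le\sqrt{\Phi(x)\Phi(y)}$, combined with the martingale central limit theorem for the marginals, whose Lyapunov condition is reduced via H\"older and Fubini to $\int_\R\Lambda(x)(w(x))^{2+\delta}\,\nu(dx)<\infty$ exactly as in the paper. The only (harmless) deviation is that the paper proves $\langle Z_n^w,h\rangle\to^d\langle G^w,h\rangle$ for \emph{every} $h\in L^2(\R,\nu)$ using the bound $\left|\int_\R w(x)1_{(-\infty,x]}(X^n_{i-1})h(x)\,\nu(dx)\right|^{2+\delta}\le(\nu(\R))^{\delta/2}\|h\|^{2+\delta}\int_\R 1_{(-\infty,x]}(X^n_{i-1})(w(x))^{2+\delta}\,\nu(dx)$, whereas you restrict to a bounded orthonormal basis and invoke Cram\'er--Wold, which still suffices since tightness plus finite-dimensional convergence along a complete basis identifies the limit law.
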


\begin{rem}
As for \eqref{assE}, it follows from a well-known fact on the uniform integrability that if
\[ \sum_{i=1}^n 1_{(-\infty,x]}(X_{i-1}^n) E^n[(m_i^n)^2 |\F_{i-1}^n] \to \Psi(x)  \quad {\rm a.s.} \quad (\forall x\in \R), \]
then \eqref{assE} is equivalent to the uniform integrability of
\[ \sum_{i=1}^n 1_{(-\infty,x]}(X_{i-1}^n) E^n[(m_i^n)^2 |\F_{i-1}^n] \]
for every $x\in\R$, and also equivalent to 
$ I_n(x)  \to \Psi(x)$ for every $x\in \R$.
\end{rem}

\begin{rem}
As for \eqref{assE2}, if we assume Assumption \ref{AssN} (iii)(iv), then
\begin{eqnarray*}
I_n(x) &=&  E^n \left[ \sum_{i=1}^n 1_{(-\infty,x]}(X_{i-1}^n) E^n[(m_i^n)^2 |\F_{i-1}^n] \right]
\\
&\leq& E^n \left[ \sum_{i=1}^n 1_{(-\infty,x]}(X_{i-1}^n) c^n_i \phi(X^n_{i-1}) \right] \\
&=& E \left[ \sum_{i=1}^n 1_{(-\infty,x]}(\zeta) c^n_i \phi(\zeta) \right] \\
&\leq& \left( \sup_{n} \sum_{i=1}^n c^n_i \right) E \left[ 1_{(-\infty,x]}(\zeta) \phi(\zeta) \right],
\end{eqnarray*}
so we can take $\Phi(x)$ as the right-hand side of the above display (if the integrability condition holds).
\end{rem}

Based on Theorems \ref{mainN} and \ref{main}, which are of interest in their own right, we discuss goodness-of-fit tests for stationary ergodic processes.
Specifically, we propose a Cram\'er--von Mises type statistic based on discrete time observation to test a simple hypothesis for a diffusion process and an Anderson--Darling type statistic for a time series.
Goodness-of-fit tests have been extensively studied in the literature because they are useful to judge that a mathematical model is acceptable to describe sampled data.
We refer to \cite{RefGC} for a review on the goodness-of-fit tests, whose Section 5 is devoted to tests when dependence is present.
Among abundant works treating goodness-of-fit tests for stochastic process models, we are interested in an approach based on empirical processes marked by residuals developed by \cite{RefKS} and \cite{RefE}.
Our limit theorems (Theorems \ref{mainN} and \ref{main}) do {\em not} include Theorem 2.1 of \cite{RefKS} or Theorem 1 of \cite{RefE}, but our results contain the following merits which are important in our applications.
The assumptions of Theorem 2.1 of \cite{RefKS} or Theorem 1 of \cite{RefE} do not suit our diffusion process setting, on the other hand Theorem \ref{mainN} can be applied.
Moreover, although a weak convergence of an Anderson--Darling type test statistic cannot be directly derived from the weak convergence in the Skorokhod space or $\ell^\infty$ space which have been established in \cite{RefKS} and \cite{RefE}, Theorem \ref{main} enables us to consider the Anderson--Darling type test statistic.
As a result, the applications presented in this paper are novel.

\begin{rem}
Based on smoothing, \cite{RefN,RefN09} and \cite{RefMNN} proposed the Kolmogorov--Smirnov type goodness-of-fit tests for time series models and diffusion processes (based on discrete time observation), respectively.
What they treated is not $Z_n(x)$ but its smoothed version using the Kernel density estimation.
\end{rem}

\begin{rem}
The goodness-of-fit test for diffusion processes based on continuous time observation, which is not studied in this paper, is considered in several works.
See for example \cite{RefDK}, \cite{RefK2}, \cite{RefNN} and referenecs therein.
\end{rem}

\begin{rem}
In this paper, we only consider simple hypotheses.
\cite{RefKS} have considered not only simple hypothesis but also parametric composite hypothesis based on the idea of the martingale transformation \citep{RefK}.
Considering parametric composite hypothesis is a possible direction in future researches.
\end{rem}

\section{Proof of Theorem \ref{mainN}}\label{sec:2}
By Prohorov's tightness criterion for Hilbert space valued random sequences (see, e.g., Theorem 1.8.4 of \cite{RefVW}), it suffices to show the following two lemmas.

\begin{lem}\label{MarN}
Under Assumption \ref{AssN} (i)(ii), $\langle Z_n, h \rangle$ converges in distribution to $\langle G, h \rangle$ for any $h \in L^2(\R,\nu)$ as $n\to\infty$.
\end{lem}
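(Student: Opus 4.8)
The plan is to reduce the scalar $\langle Z_n,h\rangle$ to a one--dimensional martingale sum and then invoke a martingale central limit theorem. Since the sum defining $Z_n$ is finite, interchanging it with the integral against $h$ gives
\[
\langle Z_n,h\rangle=\sum_{i=1}^n m_i^n\int_\R 1_{(-\infty,x]}(X_{i-1}^n)\,h(x)\,\nu(dx)=\sum_{i=1}^n m_i^n\,H(X_{i-1}^n),
\]
where $H(y):=\int_{[y,\infty)}h(x)\,\nu(dx)$. By the Cauchy--Schwarz inequality $|H(y)|\le\|h\|\,\nu(\R)^{1/2}=:\|H\|_\infty<\infty$, so $H$ is bounded. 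Writing $\xi_i^n:=m_i^n H(X_{i-1}^n)$, the $\F_{i-1}^n$-measurability of $X_{i-1}^n$ together with $E^n[m_i^n\mid\F_{i-1}^n]=0$ shows that $\{\xi_i^n\}$ is a martingale difference array, and $\langle Z_n,h\rangle=\sum_{i=1}^n\xi_i^n$.

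Next I would apply the martingale CLT for triangular arrays: if the conditional Lindeberg condition holds and the predictable quadratic variation $V_n:=\sum_{i=1}^n E^n[(\xi_i^n)^2\mid\F_{i-1}^n]$ converges in probability to a constant $v\ge0$, then $\sum_{i=1}^n\xi_i^n$ converges in distribution to $N(0,v)$. The Lindeberg condition is immediate from Assumption \ref{AssN}(ii) by a Lyapunov bound: for every $\ve>0$,
\[
\sum_{i=1}^n E^n\!\left[(\xi_i^n)^2 1_{\{|\xi_i^n|>\ve\}}\mid\F_{i-1}^n\right]\le \ve^{-\delta}\|H\|_\infty^{2+\delta}\sum_{i=1}^n E^n\!\left[|m_i^n|^{2+\delta}\mid\F_{i-1}^n\right]\to^p 0 .
\]
So the substance of the argument lies entirely in the convergence of $V_n$. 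Here $V_n=\sum_{i=1}^n H(X_{i-1}^n)^2\,E^n[(m_i^n)^2\mid\F_{i-1}^n]$; writing $F_n(x):=\sum_{i=1}^n 1_{(-\infty,x]}(X_{i-1}^n)E^n[(m_i^n)^2\mid\F_{i-1}^n]$, Assumption \ref{AssN}(i) is exactly $F_n(x)\to^p\Psi(x)$ for each $x$.

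To identify $v$ I would expand $H(y)^2=\iint 1_{[y,\infty)}(x)1_{[y,\infty)}(x')h(x)h(x')\,\nu(dx)\nu(dx')$ and use Fubini (valid a.s., as each $F_n(\infty)<\infty$) to get the bilinear representations
\[
V_n=\iint h(x)h(x')\,F_n(x\wedge x')\,\nu(dx)\nu(dx'),\qquad v=\iint h(x)h(x')\,\Psi(x\wedge x')\,\nu(dx)\nu(dx').
\]
Since $\Psi$ is nondecreasing we have $\Psi(x)\wedge\Psi(x')=\Psi(x\wedge x')$, so $v=\iint h(x)h(x')E[B(\Psi(x))B(\Psi(x'))]\,\nu(dx)\nu(dx')=\mathrm{Var}\langle G,h\rangle$; as $\langle G,h\rangle$ is centered Gaussian, this identifies the limit law as that of $\langle G,h\rangle$.

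It then remains to prove $V_n\to^p v$. From the bilinear forms,
\[
|V_n-v|\le\Big(\int_\R|h|\,d\nu\Big)^2\sup_{z\in\R}|F_n(z)-\Psi(z)|,
\]
and $\int_\R|h|\,d\nu\le\|h\|\,\nu(\R)^{1/2}<\infty$, so it suffices to show $\sup_z|F_n(z)-\Psi(z)|\to^p0$. This is a P\'olya-type uniformization of the pointwise convergence in (i): each $F_n$ is nondecreasing and the limit $\Psi$ is continuous, so pointwise convergence on a countable dense set forces uniform convergence, \emph{provided} the total masses also converge, $F_n(\infty)\to^p\Psi(\infty)$. Securing this control of the total predictable quadratic variation at $+\infty$ — ruling out mass escaping to infinity, which $H$ need not annihilate — is the main obstacle, and is exactly where the finiteness $\Psi(\infty)<\infty$ and the endpoint behavior of $\Psi$ must be used; passing to a.s.-convergent subsequences over the dense set makes the P\'olya argument rigorous in probability. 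Once $V_n\to^p v$ is established, the martingale CLT yields $\langle Z_n,h\rangle\to^{d}N(0,v)$, i.e. $\langle Z_n,h\rangle$ converges in distribution to $\langle G,h\rangle$, as claimed.
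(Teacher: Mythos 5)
Your argument coincides step for step with the paper's proof: the reduction of $\langle Z_n,h\rangle$ to the scalar martingale--difference array $\xi^n_i=m^n_iH(X^n_{i-1})$, the Lyapunov bound $|H|\le\|h\|\,\nu(\R)^{1/2}$ combined with Assumption~\ref{AssN}(ii) to dispose of the Lindeberg condition, and the identification of the limiting conditional variance through the bilinear form $V_n=\iint h(x)h(x')F_n(x\wedge x')\,\nu(dx)\nu(dx')$ with $\Psi(x)\wedge\Psi(x')=\Psi(x\wedge x')$. All of these steps are carried out correctly and match the paper, which likewise reduces everything to the claim $\sup_{x}|F_n(x)-\Psi(x)|\to^p0$ (its $R_n$ is your $F_n-\Psi$).

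The one step you leave open is precisely the one the paper waves away with ``it is not difficult to prove.'' You are right that the P\'olya argument needs, besides pointwise convergence, monotonicity of $F_n$ and continuity of $\Psi$, the convergence of the total masses $F_n(\infty)=\sum_{i=1}^nE^n[(m^n_i)^2\mid\F^n_{i-1}]\to^p\Psi(\infty)$; but your suggestion that this comes from ``the finiteness $\Psi(\infty)<\infty$'' cannot be made to work, because neither (i) (which constrains $F_n$ only at fixed finite $x$) nor (ii) (a Lyapunov condition, which gives no upper bound on $\sum_iE^n[(m^n_i)^2\mid\F^n_{i-1}]$) prevents conditional-variance mass from escaping to $+\infty$. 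Concretely, take $X^n_{i-1}\equiv a_n\to\infty$ deterministic and $m^n_i=\pm\sigma_n/\sqrt n$ with $\sigma_n\to\infty$ slowly enough that $\sigma_n^{2+\delta}n^{-\delta/2}\to0$: then (i) holds with $\Psi\equiv0$ and (ii) holds, yet $V_n=H(a_n)^2\sigma_n^2$ can be pinned at $1$ by choosing $a_n$ with $\nu([a_n,\infty))=1/\sigma_n$, and $\langle Z_n,h\rangle\to^dN(0,1)\neq\langle G,h\rangle$. So the gap you flag is genuine, is inherited from (indeed present in) the paper's own proof, and is closed only by the remaining hypotheses (iii)--(iv) of Assumption~\ref{AssN}, which give $\sup_nE[F_n(\infty)-F_n(K)]\le(\sup_n\sum_ic^n_i)\,E[1_{(K,\infty)}(\zeta)\phi(\zeta)]\to0$ as $K\to\infty$ and hence control the tail of the bilinear integral; the paper uses (iii)--(iv) only for tightness, but they are also needed here. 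To finish your proof you should either import (iii)--(iv) into this lemma or add the hypothesis $F_n(\infty)\to^p\Psi(\infty)$ explicitly, rather than relying on $\Psi(\infty)<\infty$.
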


\begin{lem}\label{TightN}
Under Assumption \ref{AssN} (iii)(iv), it holds that
\[
 \lim_{J\to\infty} \limsup_{n\to\infty} E^n\left[ \sum_{j=J}^\infty \left\langle Z_n, e_j \right\rangle^2 \right] = 0.  
\]
\end{lem}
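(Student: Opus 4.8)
The plan is to combine the orthogonality induced by the martingale difference structure with Parseval's identity relative to the fixed orthonormal basis $\{e_j\}$ of $L^2(\R,\nu)$. Throughout, all summands are nonnegative, so every interchange of sums and expectations is justified by Tonelli's theorem.

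First I would rewrite the coordinates $\langle Z_n,e_j\rangle$ in terms of Fourier coefficients of indicator functions. For $y\in\R$ let $g_y\in L^2(\R,\nu)$ be the function $x\mapsto 1_{(-\infty,x]}(y)=1_{[y,\infty)}(x)$. By Fubini's theorem,
\[
\langle Z_n,e_j\rangle=\sum_{i=1}^n m_i^n\,\langle g_{X_{i-1}^n},e_j\rangle,
\]
and each coefficient $\langle g_{X_{i-1}^n},e_j\rangle$ is $\F_{i-1}^n$-measurable because $X_{i-1}^n$ is. Since $E^n[m_i^n|\F_{i-1}^n]=0$, conditioning on the filtration of the larger index first makes every cross term vanish, and hence
\[
E^n[\langle Z_n,e_j\rangle^2]=\sum_{i=1}^n E^n\!\left[E^n[(m_i^n)^2|\F_{i-1}^n]\,\langle g_{X_{i-1}^n},e_j\rangle^2\right].
\]

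Next I would sum this identity over $j\geq J$ and move the sum inside. The key step is Parseval's identity: for fixed $y$,
\[
\sum_{j=1}^\infty\langle g_y,e_j\rangle^2=\|g_y\|^2=\nu([y,\infty))\leq\nu(\R)<\infty,
\]
so the tail $\sum_{j\geq J}\langle g_y,e_j\rangle^2$ is bounded uniformly in $y$ and $J$ by $\nu(\R)$ and, for each fixed $y$, tends to $0$ as $J\to\infty$. Using Assumption \ref{AssN}(iii) to bound $E^n[(m_i^n)^2|\F_{i-1}^n]\leq c_i^n\phi(X_{i-1}^n)$ and Assumption \ref{AssN}(iv) to replace the $P^n$-law of each $X_{i-1}^n$ by the law of $\zeta$, I obtain
\[
\sum_{j=J}^\infty E^n[\langle Z_n,e_j\rangle^2]\leq\left(\sum_{i=1}^n c_i^n\right)E\!\left[\phi(\zeta)\sum_{j=J}^\infty\langle g_\zeta,e_j\rangle^2\right].
\]

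Finally, write $C=\sup_n\sum_{i=1}^n c_i^n$, which is finite by Assumption \ref{AssN}(iii), and $R_J=E[\phi(\zeta)\sum_{j\geq J}\langle g_\zeta,e_j\rangle^2]$. Taking $\limsup_{n\to\infty}$ bounds the quantity of interest by $C\,R_J$. The integrand defining $R_J$ is dominated by $\nu(\R)\,\phi(\zeta)$, which is integrable since $E[\phi(\zeta)]<\infty$, and tends to $0$ pointwise as $J\to\infty$; the dominated convergence theorem then yields $R_J\to0$, and nonnegativity gives the claimed limit. The main obstacle is not any single hard estimate but the bookkeeping that makes the two limits decouple: one must arrange the Parseval tail so that it is simultaneously pointwise vanishing and dominated by the $\zeta$-integrable envelope $\nu(\R)\phi(\zeta)$, which is precisely where the finiteness of $\nu$ and the integrability in Assumption \ref{AssN}(iv) enter.
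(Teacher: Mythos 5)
Your proof is correct and takes essentially the same route as the paper's: both exploit the martingale-difference orthogonality to reduce $E^n[\langle Z_n,e_j\rangle^2]$ to a sum of conditional second moments, bound these by $c_i^n\phi(X_{i-1}^n)$ via Assumption \ref{AssN} (iii), pass to the common law of $\zeta$ via (iv), and kill the Parseval tail by dominated convergence with the integrable envelope $\nu(\R)\phi(\zeta)$. The paper merely packages the final step through an auxiliary $L^2(\R,\nu)$-valued random element $\eta=\bigl(\sup_n\sum_{i=1}^n c_i^n\bigr)^{1/2}\,1_{(-\infty,\cdot]}(\zeta)\sqrt{\phi(\zeta)}$ satisfying $E[\|\eta\|^2]<\infty$, which is notationally different from, but mathematically identical to, your $R_J$ argument.
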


\subsection{Proof of Lemma \ref{MarN}}
Since
\begin{eqnarray*}
\langle Z_n^w, h \rangle
&=& \int_\R \sum_{i=1}^n 1_{(-\infty,x]}(X_{i-1}^n) m^n_i h(x) \nu(dx)  \\
&=& \sum_{i=1}^n \left( \int_\R 1_{(-\infty,x]}(X_{i-1}^n)  h(x) \nu(dx) \right) m^n_i,
\end{eqnarray*}
we shall apply the martingale central limit theorem for the martingale difference sequence
\[ \left\{ \left( \int_\R 1_{(-\infty,\cdot]}(X_{i-1}^n) h(\cdot) \nu(dx) \right) m^n_i \right\}_{i=1}^n. \]

It is not difficult to prove that Assumption \ref{AssN} (i)  leads
\[ \sup_{x\in\R} \left|R_n(x)\right| \to^p 0, \]
where
\begin{equation}
R_n(\cdot) = \sum_{i=1}^n 1_{(-\infty,\cdot]}(X^n_{i-1}) E^n[|m^n_i|^2|\F^n_{i-1}] - \Psi(\cdot)  .  \label{defR}
\end{equation}
Hence
\begin{eqnarray*} 
\lefteqn{
\sum_{i=1}^n \left( \int_\R 1_{(-\infty,x]}(X_{i-1}^n) h(x) \nu(dx) \right)^2 E^n[|m^n_i|^2|\F^n_{i-1}] 
}
\\
&=& \sum_{i=1}^n \int_\R\int_\R 1_{(-\infty,x \wedge y]}(X_{i-1}^n) h(x) h(y) \nu(dx) \nu(dy) E^n[|m^n_i|^2|\F^n_{i-1}] \\
&=& \int_\R\int_\R \Psi(x\wedge y) h(x) h(y) \nu(dx) \nu(dy)  \\
&&+ \int_\R\int_\R  R_n(x\wedge y)  h(x) h(y) \nu(dx) \nu(dy)  \\
&\to^p& \int_\R\int_\R \Psi(x\wedge y)h(x)h(y) \nu(dx) \nu(dy) .
\end{eqnarray*}
On the other hand, it holds that
\[ E[\langle G, h \rangle^2] 
= \int_\R \int_\R \Psi(x \wedge y) h(x) h(y) \nu(dx) \nu(dy) . \]

What is left is to show the Lyapunov-type condition
\begin{equation}
\sum_{i=1}^n E^n \left[ \left| \int_\R 1_{(-\infty,x]}(X_{i-1}^n) h(x) \nu(dx) \right|^{2+\delta} |m^n_i|^{2+\delta} | \F^n_{i-1} \right] \to^p 0. \label{NLG}
\end{equation}
From
\begin{eqnarray*}
 \left| \int_\R 1_{(-\infty,x]}(X_{i-1}^n) h(x) \nu(dx) \right|^{2+\delta} 
&\leq& \left( \int_\R 1_{(-\infty,x]}(X_{i-1}^n)  \nu(dx) \right)^{\frac{2+\delta}{2}} \| h \|^{2+\delta} \\
&\leq& (\nu(\R))^{\frac{2+\delta}{2}} \| h \|^{2+\delta} ,
\end{eqnarray*}
the left-hand side of \eqref{NLG} is bounded above by
\[  (\nu(\R))^{\frac{2+\delta}{2}} \| h \|^{2+\delta} \sum_{i=1}^n E^n \left[ |m^n_i|^{2+\delta} | \F^n_{i-1} \right], \]
which converges to 0 in probability by Assumption \ref{AssN} (ii).

This completes the proof.
\qed

\subsection{Proof of Lemma \ref{TightN}}

For simplicity, let us denote
\[ \xi^n_i(x) = 1_{( -\infty, x ]} (X_{i-1}^n) m_i^n  \quad (x\in \R). \]

It follows from Assumption \ref{AssN} (iii)(iv) that
\begin{eqnarray*} 
E^n[\langle \xi^n_i, e_j \rangle^2 | \F^n_{i-1}] 
&=& \langle 1_{( -\infty, \cdot ]} (X_{i-1}^n) , e_j \rangle^2 E^n[ |m_i^n|^2 | \F^n_{i-1}]  \\
&\leq& \langle 1_{( -\infty, \cdot ]} (X_{i-1}^n) , e_j \rangle^2 c^n_i \phi(X^n_{i-1}) \\
&=& c^n_i  \left\langle 1_{( -\infty, \cdot ]} (X_{i-1}^n) \sqrt{\phi(X^n_{i-1})}, e_j \right\rangle^2,
\end{eqnarray*}
which yields that
\[
\sum_{i=1}^n E^n \left[ \langle \xi^n_i, e_j \rangle^2 \right] 
\leq \sum_{i=1}^n c^n_i E\left[ \langle \tilde{\xi}, e_j \rangle^2 \right] 
\leq E\left[ \left\langle \eta , e_j \right\rangle^2 \right],
\]
where $x\leadsto\tilde\xi(x)$ is a $L^2(\R,\nu)$-valued random variable which follows the same distribution as $1_{(-\infty,x]}(\zeta)\sqrt{\phi(\zeta)}$, and 
\[ \eta = \left(\sup_n\sum_{i=1}^n c^n_i \right)^{1/2} \tilde{\xi}. \]
It follows that
\begin{eqnarray*}
E^n\left[ \sum_{j=J}^\infty \left\langle \sum_{i=1}^n \xi^n_i, e_j \right\rangle^2 \right] 
&=& \sum_{j=J}^\infty E^n\left[  \left\langle \sum_{i=1}^n \xi^n_i, e_j \right\rangle^2 \right]  \\
&=& \sum_{j=J}^\infty \sum_{i=1}^n E^n\left[  \left\langle \xi^n_i, e_j \right\rangle^2 \right]  \\
&\leq& \sum_{j=J}^\infty E \left[  \left\langle \eta, e_j \right\rangle^2 \right]  \\
&=& E \left[ \sum_{j=J}^\infty \left\langle \eta, e_j \right\rangle^2 \right] .
\end{eqnarray*}
Since $E[\|\eta\|^2]<\infty$, the dominated convergence theorem yields that
\[ \lim_{J\to\infty} E \left[ \sum_{j=J}^\infty \left\langle \eta, e_j \right\rangle^2 \right] 
= E \left[ \lim_{J\to\infty}  \sum_{j=J}^\infty \left\langle \eta, e_j \right\rangle^2 \right] =0. \]
This completes the proof.
\qed

\section{Proof of Theorem \ref{main}}\label{sec:3}

By Prohorov's criterion, it suffices to show the following two Lemmas.

\begin{lem}\label{Marginal}
Under Assumption \ref{Ass2}, $\langle Z_n^w, h \rangle$ converges in distribution to $\langle G^w, h \rangle$ for any $h \in L^2(\R,\nu)$ as $n\to\infty$.
\end{lem}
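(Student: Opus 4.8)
The plan is to follow the strategy of the proof of Lemma \ref{MarN}: rewrite $\langle Z_n^w,h\rangle$ as a sum over a martingale difference array and apply the martingale central limit theorem. By Fubini's theorem,
\[
\langle Z_n^w, h \rangle = \sum_{i=1}^n \left( \int_\R 1_{(-\infty,x]}(X_{i-1}^n)\, w(x) h(x)\, \nu(dx) \right) m_i^n =: \sum_{i=1}^n \xi^n_i ,
\]
so it suffices to verify, for the array $\{\xi^n_i\}_{i=1}^n$, that the predictable quadratic variation converges in probability to $E[\langle G^w,h\rangle^2]=\int_\R\int_\R w(x)w(y)h(x)h(y)\Psi(x\wedge y)\,\nu(dx)\nu(dy)$ and that a Lyapunov-type negligibility condition holds. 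I emphasize that one cannot simply apply Lemma \ref{MarN} with the test function $wh$, because $wh$ need not belong to $L^2(\R,\nu)$ when $w$ is unbounded; the entire point is to replace this reduction by the $w$-weighted integrability hypotheses in Assumption \ref{Ass2}.

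For the quadratic variation, expanding the square and using $1_{(-\infty,x]}(X_{i-1}^n)1_{(-\infty,y]}(X_{i-1}^n)=1_{(-\infty,x\wedge y]}(X_{i-1}^n)$ gives
\[
\sum_{i=1}^n \left( \int_\R 1_{(-\infty,x]}(X_{i-1}^n) w(x) h(x)\, \nu(dx) \right)^2 E^n[(m_i^n)^2|\F^n_{i-1}] = \int_\R\int_\R w(x)w(y)h(x)h(y)\bigl(\Psi(x\wedge y)+R_n(x\wedge y)\bigr)\,\nu(dx)\nu(dy),
\]
with $R_n$ as in \eqref{defR}. The first term is the target limit, so I would show the $R_n$-term tends to $0$ in $L^1(P^n)$. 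Taking absolute values inside and applying Fubini, the key is the pointwise bound $E^n[|R_n(z)|]\le I_n(z)+\Psi(z)\le\Phi(z)+\Psi(z)$, valid for large $n$ by \eqref{assE2} (using $E^n[R_n(z)+\Psi(z)]=I_n(z)$), together with $E^n[|R_n(z)|]\to0$ from \eqref{assE}. The resulting dominating function is controlled by the monotonicity inequality $\Phi(x\wedge y)+\Psi(x\wedge y)\le\sqrt{(\Phi(x)+\Psi(x))(\Phi(y)+\Psi(y))}$, after which Cauchy--Schwarz factorizes the double integral into $\left(\int_\R w(x)|h(x)|\sqrt{\Phi(x)+\Psi(x)}\,\nu(dx)\right)^2\le\|h\|^2\int_\R (w(x))^2(\Phi(x)+\Psi(x))\,\nu(dx)<\infty$ by Assumption \ref{Ass2}(i); dominated convergence then finishes this step.

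For the Lyapunov condition it suffices to prove $\sum_{i=1}^n E^n[|\xi^n_i|^{2+\delta}]\to0$, which yields the conditional version since the summands are nonnegative with vanishing expectation. I would bound the integral factor by H\"older's inequality with exponents $2+\delta$ and $(2+\delta)/(1+\delta)$, allotting $(w(x))^{2+\delta}$ to the first factor and $|h(x)|^{(2+\delta)/(1+\delta)}$ to the second; since $\nu$ is finite and $(2+\delta)/(1+\delta)\le2$, the latter factor is bounded by a finite constant depending only on $h$. After Fubini this reduces the sum to $\mathrm{const}\cdot\int_\R (w(x))^{2+\delta}\bigl(\sum_{i=1}^n E^n[1_{(-\infty,x]}(X_{i-1}^n)|m_i^n|^{2+\delta}]\bigr)\nu(dx)$, which tends to $0$ by dominated convergence: the integrand vanishes pointwise by Assumption \ref{Ass2}(ii) and is dominated by $\Lambda(x)(w(x))^{2+\delta}$, integrable by \eqref{assL1}.

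I expect the main obstacle to be arranging the dominating functions to be integrable in the presence of an unbounded weight $w$. Concretely, the cross term forces the monotonicity trick $\Phi(x\wedge y)\le\sqrt{\Phi(x)\Phi(y)}$ in the variance step, while in the Lyapunov step the H\"older exponents must be chosen so that $w$ appears to the exact power $2+\delta$ and $h$ is absorbed into a finite constant using the finiteness of $\nu$; any other split would produce an envelope not covered by Assumption \ref{Ass2}. Once these envelopes are in place, the martingale central limit theorem delivers $\sum_{i=1}^n\xi^n_i\to^d N\bigl(0,E[\langle G^w,h\rangle^2]\bigr)$, and since $\langle G^w,h\rangle$ is centered Gaussian with that variance, the claim follows.
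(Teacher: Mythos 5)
Your proposal is correct and follows essentially the same route as the paper: the same martingale-difference decomposition, $L^1$-convergence of the predictable quadratic variation via the bound $E^n[|R_n|]\le I_n+\Psi\le\Phi+\Psi$ with a monotonicity-plus-Cauchy--Schwarz envelope, and a Lyapunov step reduced by dominated convergence to the $\Lambda(x)(w(x))^{2+\delta}$ bound. The only differences are cosmetic (you combine the envelope as $\sqrt{(\Phi+\Psi)(x)(\Phi+\Psi)(y)}$ rather than as a sum of two square roots, and you apply H\"older with exponents $(2+\delta,\tfrac{2+\delta}{1+\delta})$ in one step where the paper uses Cauchy--Schwarz followed by a second H\"older on the finite measure $\nu$), and both yield the same final dominating functions.
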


\begin{lem}\label{Tight}
Under Assumption \ref{Ass2} (i), it holds that
\[
 \lim_{J\to\infty} \limsup_{n\to\infty} E^n\left[ \sum_{j=J}^\infty \left\langle Z^w_n, e_j \right\rangle^2 \right] = 0.  
\]
\end{lem}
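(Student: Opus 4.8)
The plan is to reduce the tail sum to a tail of a convergent trace. Writing $Z_n^w=\sum_{i=1}^n\xi_i^{w,n}$ with $\xi_i^{w,n}(x)=w(x)1_{(-\infty,x]}(X_{i-1}^n)m_i^n$, we have $\langle Z_n^w,e_j\rangle=\sum_{i=1}^n g_j(X_{i-1}^n)\,m_i^n$, where $g_j(z)=\langle w\,1_{[z,\infty)},e_j\rangle$ is $\F_{i-1}^n$-measurable (here $\{e_j\}$ is the orthonormal basis of $L^2(\R,\nu)$ used throughout). Since $\{m_i^n\}$ is a martingale difference sequence the cross terms vanish, and using $1_{(-\infty,x]}(X_{i-1}^n)1_{(-\infty,y]}(X_{i-1}^n)=1_{(-\infty,x\wedge y]}(X_{i-1}^n)$ together with the definition of $I_n$,
\[
a_j^n:=E^n[\langle Z_n^w,e_j\rangle^2]=\sum_{i=1}^n E^n[g_j(X_{i-1}^n)^2(m_i^n)^2]=\int_\R\int_\R w(x)w(y)I_n(x\wedge y)e_j(x)e_j(y)\,\nu(dx)\nu(dy).
\]
Each $a_j^n\ge0$, being the expectation of a square.

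Next I would compute both the full sum over $j$ and the termwise limits. Representing $I_n(x\wedge y)=\int_\R 1_{(-\infty,x]}(z)1_{(-\infty,y]}(z)\,dI_n(z)$ (valid as $I_n$ is nondecreasing with $I_n(-\infty)=0$) and invoking Parseval, $\sum_j g_j(z)^2=\|w\,1_{[z,\infty)}\|^2=\int_{[z,\infty)}w(x)^2\nu(dx)$, a Tonelli interchange gives $\sum_j a_j^n=\int_\R w(x)^2 I_n(x)\,\nu(dx)$. Taking expectations inside \eqref{assE} yields $I_n(x)\to\Psi(x)$ for every $x$, while \eqref{assE2} gives $0\le I_n\le\Phi$ for large $n$ with $\int w^2\Phi\,\nu<\infty$; dominated convergence then produces $\sum_j a_j^n\to\int_\R w^2\Psi\,\nu=:S<\infty$. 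The same two inputs, combined with $\Phi(x\wedge y)\le\sqrt{\Phi(x)\Phi(y)}$ and Cauchy--Schwarz (so that $w\sqrt{\Phi}\,|e_j|\in L^1(\nu)$ dominates the integrand), let me apply dominated convergence in the double integral to get, for each fixed $j$,
\[
a_j^n\to a_j:=\int_\R\int_\R w(x)w(y)\Psi(x\wedge y)e_j(x)e_j(y)\,\nu(dx)\nu(dy),
\]
and the identical Parseval/Tonelli computation gives $\sum_j a_j=\int_\R w^2\Psi\,\nu=S$.

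Finally I would combine these facts. Because all $a_j^n,a_j\ge0$, for each fixed $J$,
\[
\limsup_{n\to\infty}\sum_{j\ge J}a_j^n\le\limsup_{n\to\infty}\sum_{j\ge1}a_j^n-\liminf_{n\to\infty}\sum_{j=1}^{J-1}a_j^n=S-\sum_{j=1}^{J-1}a_j=\sum_{j\ge J}a_j,
\]
where I used convergence of the total sum and of each of the finitely many leading terms. Letting $J\to\infty$ and using $\sum_j a_j=S<\infty$ gives the assertion.

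The main obstacle is precisely why this indirect route is needed: one cannot bound the tail by simply replacing $I_n$ with $\Phi$. Although $I_n\le\Phi$ pointwise, the difference $\Phi-I_n$ is not monotone, so the kernel $(x,y)\mapsto(\Phi-I_n)(x\wedge y)$ need not be nonnegative definite (such $\min$-kernels are positive only for nondecreasing functions), and the operator inequality $\mathcal K_n\preceq\mathcal K_\Phi$ may fail. Hence the domination $\int w^2\Phi\,\nu<\infty$ controls only the \emph{total} trace, and the tail must instead be handled through the convergence $\sum_j a_j^n\to S$ together with the termwise convergence $a_j^n\to a_j$ --- which is exactly the information Assumption \ref{Ass2}(i) supplies.
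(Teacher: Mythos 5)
Your proof is correct and follows essentially the same route as the paper: both write the tail $\sum_{j\geq J}$ as the total trace $E^n[\|Z_n^w\|^2]=\int I_n w^2\,d\nu$ minus the head $\sum_{j<J}E^n[\langle Z_n^w,e_j\rangle^2]$, pass to the limit in each piece by dominated convergence using $I_n\leq\Phi$ and $\int\Phi w^2\,d\nu<\infty$, and close the argument with Parseval (the paper phrases the limiting quantities as second moments of $w\,B\circ\Psi$, whereas you compute $\sum_j a_j=\int\Psi w^2\,d\nu$ directly, but this is only a cosmetic difference). Your closing remark on why the tail cannot be dominated termwise by replacing $I_n$ with $\Phi$ correctly identifies the reason this indirect decomposition is needed.
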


In the proofs of these lemmas, let $n$ be sufficiently large such that \eqref{assE} and \eqref{assL1} hold.

\subsection{Proof of Lemma \ref{Marginal}}
Since it holds that
\begin{eqnarray*}
\langle Z_n^w, h \rangle
&=& \int_\R \sum_{i=1}^n w(x) 1_{(-\infty,x]}(X_{i-1}^n)  m^n_i h(x) \nu(dx)  \\
&=& \sum_{i=1}^n \left( \int_\R w(x) 1_{(-\infty,x]}(X_{i-1}^n)  h(x) \nu(dx) \right) m^n_i,
\end{eqnarray*}
we shall apply the martingale central limit theorem for the martingale difference sequence
\[ \left\{ \left( \int_\R w(x) 1_{(-\infty,x]}(X_{i-1}^n) h(x) \nu(dx) \right) m^n_i \right\}_{i=1}^n. \]

First we show that
\begin{equation}\label{L31pf}
\sum_{i=1}^n \left( \int_\R w(x) 1_{(-\infty,x]}(X_{i-1}^n)  h(x) \nu(dx) \right)^2 E^n[(m^n_i)^2|\F^n_{i-1}]
\end{equation}
converges in first mean to
\[ E[\langle G^w, h \rangle^2] 
= \int_\R \int_\R \Psi(x \wedge y) w(x) w(y) h(x) h(y) \nu(dx) \nu(dy) . \]
Note that \eqref{L31pf} equals
\begin{eqnarray*}
\lefteqn{\sum_{i=1}^n \left( \int_\R \int_\R w(x) w(y)  1_{(-\infty,x \wedge y]}(X_{i-1}^n) h(x) h(y) \nu(dx) \nu(dy) \right) E^n[(m^n_i)^2|\F^n_{i-1}]}
\\
&=& \int_\R \int_\R  \sum_{i=1}^n 1_{(-\infty,x \wedge y]}(X_{i-1}^n) E^n[(m^n_i)^2|\F^n_{i-1}] w(x) w(y) h(x) h(y) \nu(dx) \nu(dy).
\end{eqnarray*}
We use the dominated convergence theorem to see
\begin{equation}\label{L31tg1}
\int_\R \int_\R E^n \left[  \left| R_n(x \wedge y) \right| \right] w(x) w(y)  |h(x) h(y)| \nu(dx) \nu(dy) \to 0,
\end{equation}
where $R_n(\cdot)$ is defined in \eqref{defR}, because
\begin{eqnarray}
\lefteqn{
E^n \left[ \left| \int_\R \int_\R  R_n(x \wedge y)  w(x) w(y) h(x) h(y) \nu(dx) \nu(dy) \right| \right] } \nonumber \\
&\leq& \int_\R \int_\R E^n \left[  \left| R_n(x \wedge y) \right| \right] w(x) w(y)  |h(x) h(y)| \nu(dx) \nu(dy).
\end{eqnarray}
From \eqref{assE}, we have
\[ E^n \left[  \left| R_n(x \wedge y) \right| \right] w(x) w(y)  |h(x) h(y)|  \to 0 \]
for every $x$ and $y$.
Moreover, it holds that
\begin{eqnarray*}
\lefteqn{
E^n \left[  \left| R_n(x \wedge y) \right| \right]w(x) w(y) |h(x) h(y)| 
}\\
&\leq& \left( I_n(x \wedge y) + \Psi(x \wedge y) \right)
w(x) w(y) |h(x) h(y)|  \\
&\leq& \left( \Psi(x \wedge y) + \Phi(x\wedge y) \right) w(x) w(y) |h(x) h(y)| \\
&\leq& \left( \sqrt{\Psi(x)\Psi(y)} + \sqrt{\Phi(x)\Phi(y)} \right) w(x) w(y) |h(x) h(y)| \\
&=& \sqrt{\Psi(x)\Psi(y)} w(x) w(y) |h(x) h(y)| + \sqrt{\Phi(x)\Phi(y)} w(x) w(y) |h(x) h(y)|
\end{eqnarray*}
for every $x$ and $y$, where we have used 
\begin{eqnarray*}
E^n[|R_n(x \wedge y)|] 
&\leq&
E^n \left[ \sum_{i=1}^n 1_{(-\infty,x \wedge y]}(X_{i-1}^n) E^n [(m^n_i)^2|\F^n_{i-1}]  \right] + \Psi(x\wedge y) \\
&=& I_n(x \wedge y) +  \Psi (x\wedge y)
\end{eqnarray*}
and $\Psi(x\wedge y) \leq \sqrt{\Psi(x) \Psi(y)}$ and $\Phi(x\wedge y) \leq \sqrt{\Phi(x) \Phi(y)}$ which follow from the monotonicity of $\Psi$ and $\Phi$.
Furthermore, it follows from the Schwartz inequality that
\begin{eqnarray*}
\lefteqn{
\int_\R \int_\R \sqrt{\Psi(x)\Psi(y)} w(x) w(y) |h(x) h(y)| \nu(dx) \nu(dy)
}\\
&\leq& \left( \int_\R \int_\R \Psi(x)\Psi(y) (w(x))^2 (w(y))^2 \nu(dx) \nu(dy) \right)^{1/2} \\
&& \quad \left( \int_\R \int_\R  |h(x) h(y)|^2 \nu(dx) \nu(dy) \right)^{1/2} \\
&=& \left\{ \int_\R \Psi(x) (w(x))^2 \nu(dx) \right\} \| h \|^2 
< \infty
\end{eqnarray*}
and
\begin{eqnarray*}
\lefteqn{\int_\R \int_\R \sqrt{\Phi(x)\Phi(y)} w(x) w(y) |h(x) h(y)| \nu(dx) \nu(dy)} \\
&\leq& \left\{ \int_\R \Phi(x) (w(x))^2 \nu(dx) \right\} \| h \|^2 
< \infty.
\end{eqnarray*}
Therefore, the dominated convergence theorem implies \eqref{L31tg1}.

Next we see the Lyapunov-type condition, that is to say, the nonnegative valued random variable
\[
\sum_{i=1}^n E^n \left[ \left| \int_\R w(x) 1_{(-\infty,x]}(X_{i-1}^n) h(x) \nu(dx) \right|^{2+\delta} |m^n_i|^{2+\delta} | \F^n_{i-1} \right] 
\]
converges to 0 in probability.
Since it follows from the Schwartz inequality that
\begin{eqnarray*}
\lefteqn{
\left| \int_\R w(x) 1_{(-\infty,x]}(X_{i-1}^n) h(x) \nu(dx) \right|^{2+\delta}
}\\
&\leq& \left( \int_\R (w(x))^2 1_{(-\infty,x]}(X_{i-1}^n) \nu(dx) \right)^{\frac{2+\delta}{2}} \| h \|^{2+\delta} ,
\end{eqnarray*}
it suffices to see the convergence of
\[ \sum_{i=1}^n  E^n \left[ \left( \int_\R (w(x))^2 1_{(-\infty,x]}(X_{i-1}^n) \nu(dx) \right)^{\frac{2+\delta}{2}} |m^n_i|^{2+\delta} \right]  \]
to 0.
Moreover, this display can be evaluated by 
\begin{eqnarray*}
\lefteqn{
E^n \left[ \sum_{i=1}^n \left( \int_\R (w(x))^2 1_{(-\infty,x]}(X_{i-1}^n) \nu(dx) \right)^{\frac{2+\delta}{2}} |m^n_i|^{2+\delta} \right]
}\\
&\leq& (\nu(\R))^{\frac{\delta}{2}} E^n \left[ \sum_{i=1}^n \int_\R 1_{(-\infty,x]}(X_{i-1}^n) (w(x))^{2+\delta} \nu(dx) |m^n_i|^{2+\delta} \right] \\
&=& (\nu(\R))^{\frac{\delta}{2}} \int_\R \sum_{i=1}^n E^n \left[ 1_{(-\infty,x]}(X_{i-1}^n) |m^n_i|^{2+\delta} \right] (w(x))^{2+\delta} \nu(dx),
\end{eqnarray*}
so the dominated convergence theorem yields that the right-hand side converges to 0.
Indeed, as for the integrand, it holds for every $x$ that
\[ \sum_{i=1}^n E^n \left[ 1_{(-\infty,x]}(X_{i-1}^n) |m^n_i|^{2+\delta} \right] (w(x))^{2+\delta} \to 0 \]
and 
\[ \sum_{i=1}^n E^n \left[ 1_{(-\infty,x]}(X_{i-1}^n) |m^n_i|^{2+\delta} \right] (w(x))^{2+\delta} \leq \Lambda(x) (w(x))^{2+\delta} \]
whose right-hand side is $\nu$-integrable.

This completes the proof.
\qed

\subsection{Proof of Lemma \ref{Tight}}
In this subsection, let us denote
\[ \xi^n_i(x) = w(x) 1_{( -\infty, x ]} (X_{i-1}^n) m_i^n \quad (x\in \R) \]
for simplicity.
It holds that
\begin{eqnarray}
\lefteqn{
E^n\left[ \sum_{j=J}^\infty \left\langle \sum_{i=1}^n \xi_i^n, e_j \right\rangle^2 \right] \nonumber
}\\
&=& E^n\left[ \left\| \sum_{i=1}^n \xi_i^n \right\|^2  - \sum_{j=1}^J \left\langle \sum_{i=1}^n \xi_i^n, e_j \right\rangle^2 \right] \nonumber \\
&=& E^n \left[ \left\| \sum_{i=1}^n \xi_i^n \right\|^2 \right] - \sum_{j=1}^J E^n\left[ \left\langle \sum_{i=1}^n \xi_i^n, e_j \right\rangle^2 \right] . \label{L32g1}
\end{eqnarray}

As for the first term in the right-hand side of \eqref{L32g1}, since
\[ E^n[\langle \xi^n_i, \xi^n_j\rangle] = E^n[E^n[\langle \xi^n_i, \xi^n_j\rangle|\F^n_{j-1}]]=0 \]
for $i<j$, it holds that
\begin{eqnarray*}
 E^n \left[ \left\| \sum_{i=1}^n \xi_i^n \right\|^2 \right] 
&=& E^n \left[ \sum_{i=1}^n\left\|  \xi_i^n \right\|^2 \right] + 2 \sum_{i=1}^{n-1} \sum_{j=i+1}^n E^n \left[ \langle \xi^n_i, \xi^n_j\rangle \right] \\
&=& E^n \left[ \sum_{i=1}^n\left\|  \xi_i^n \right\|^2 \right].
\end{eqnarray*}
The dominated convergence theorem yields that 
\begin{eqnarray*}
\lefteqn{
\lim_{n\to\infty} E^n \left[ \sum_{i=1}^n\left\|  \xi_i^n \right\|^2 \right]
}\\
&=& \lim_{n\to\infty} \int_\R \sum_{i=1}^n  E^n [ 1_{(-\infty,x]}(X^n_{i-1}) |m^n_i|^2 ] (w(x))^2  \nu(dx) \\
&=& \lim_{n\to\infty} \int_\R I_n(x) (w(x))^2  \nu(dx) \\
&=& \int_\R \Psi(x) (w(x))^2 \nu(dx) \\
&=& \int_\R E\left[(B(\Psi(x)))^2\right] (w(x))^2 \nu(dx) \\
&=& E[\| w B \circ \Psi \|^2],
\end{eqnarray*}
where $w B \circ \Psi$ means $w(\cdot) B(\Psi(\cdot))$.
That is because for every $x\in\R$ we have
$I_n(x) (w(x))^2 \to \Psi(x) (w(x))^2$
and 
$I_n(x) (w(x))^2 \leq  \Phi(x) (w(x))^2$
whose right-hand side is $\nu$-integrable.

As for the second term in the right-hand side of \eqref{L32g1}, since $\{ \langle \xi^n_i, e_j \rangle \}_{i=1}^n$ is a martingale difference sequence, we have
\begin{eqnarray*}
\lefteqn{
E^n\left[ \left\langle \sum_{i=1}^n \xi_i^n, e_j \right\rangle^2 \right]
}\\
&=&   E^n\left[ \left( \sum_{i=1}^n \left\langle \xi_i^n, e_j \right\rangle \right)^2 \right]\\
&=& \sum_{i=1}^n  E^n\left[ \left\langle \xi_i^n, e_j \right\rangle^2 \right]\\
&=&\sum_{i=1}^n E^n\left[ \int_\R \int_\R w(x) w(y) 1_{(-\infty,x\wedge y]}(X^n_{i-1})  e_j(x) e_j(y) \nu(dx) \nu(dy) (m^n_i )^2 \right] \\ 
&=&\int_\R \int_\R  I_n(x \wedge y) w(x) w(y) e_j(x) e_j(y) \nu(dx) \nu(dy) .
\end{eqnarray*}
Hence
\begin{eqnarray*}
\lefteqn{
\sum_{j=1}^J E^n\left[ \left\langle \sum_{i=1}^n \xi_i^n, e_j \right\rangle^2 \right]
}\\
&=&\int_\R \int_\R \left( I_n(x \wedge y)  w(x) w(y) \sum_{j=1}^J e_j(x) e_j(y) \right) \nu(dx) \nu(dy).
\end{eqnarray*}
The dominated convergence theorem yields that
\begin{eqnarray}
\lefteqn{
\lim_{n\to\infty} \int_\R \int_\R \left( I_n(x \wedge y)  w(x) w(y) \sum_{j=1}^J e_j(x) e_j(y) \right) \nu(dx) \nu(dy) 
} \nonumber \\
&=& \int_\R \int_\R \lim_{n\to\infty}  \left( I_n(x \wedge y)  w(x) w(y) \sum_{j=1}^J e_j(x) e_j(y) \right) \nu(dx) \nu(dy) . \label{L32pl}
\end{eqnarray}
That is because, as for the integrand, it holds that
\begin{eqnarray*}
\lefteqn{
\left| I_n(x \wedge y)  w(x) w(y) \sum_{j=1}^J  e_j(x) e_j(y) \right|
}\\
&\leq&  \Phi(x\wedge y) w(x) w(y) \sum_{j=1}^J  |e_j(x) e_j(y)| \\
&\leq& \sqrt{\Phi(x)\Phi(y)} w(x) w(y) \sum_{j=1}^J |e_j(x) e_j(y)|
\end{eqnarray*}
for every $x$ and $y$, and
\begin{eqnarray*}
\lefteqn{\int_\R \int_\R \sqrt{\Phi(x)\Phi(y)} w(x) w(y)  \sum_{j=1}^J |e_j(x) e_j(y)| \nu(dx) \nu(dy)} \\
&=& \sum_{j=1}^J  \int_\R \int_\R \sqrt{\Phi(x)\Phi(y)} w(x) w(y)  |e_j(x) e_j(y)| \nu(dx) \nu(dy) \\
&\leq& \sum_{j=1}^J \left( \int_\R\int_\R |e_j(x) e_j(y)|^2 \nu(dx) \nu(dy) \right)^{1/2} \\
&& \quad \left( \int_\R \int_\R \Phi(x)\Phi(y) (w(x))^2 (w(y))^2 \nu(dx)\nu(dy) \right)^{1/2} \\
&=& J \int_\R \Phi(x) (w(x))^2 \nu(dx) < \infty.
\end{eqnarray*}
Moreover, \eqref{L32pl} equals
\begin{eqnarray*}
\lefteqn{\int_\R \int_\R \Psi(x\wedge y) w(x) w(y) \sum_{j=1}^J e_j(x) e_j(y) \nu(dx) \nu(dy)} \\
&=& \sum_{j=1}^J  \int_\R \int_\R (\Psi(x)\wedge \Psi(y)) w(x) w(y) e_j(x) e_j(y) \nu(dx) \nu(dy). 
\end{eqnarray*}
On the other hand, it holds that
\begin{eqnarray*}
\lefteqn{
\sum_{j=1}^J E\left[ \left\langle w B\circ\Psi , e_j  \right\rangle^2 \right]
}\\
&=& \sum_{j=1}^J E\left[ \left( \int_\R  B(\Psi(x)) w(x) e_j(x) \nu(dx) \right)^2 \right] \\
&=& \sum_{j=1}^J  \int_\R \int_\R E[B(\Psi(x)) B(\Psi(y))] w(x) w(y) e_j(x) e_j(y) \nu(dx) \nu(dy) \\
&=& \sum_{j=1}^J  \int_\R \int_\R (\Psi(x)\wedge \Psi(y)) w(x) w(y) e_j(x) e_j(y) \nu(dx) \nu(dy) .
\end{eqnarray*}

From what have been already proven, 
\[ \lim_{J\to\infty} \limsup_{n\to\infty} E^n\left[ \sum_{j=J}^\infty \left\langle \sum_{i=1}^n \xi_i^n, e_j \right\rangle^2 \right] \]
equals
\begin{equation} \label{s3ll}
E\left[ \left\| w B\circ\Psi \right\|^2 \right]  - \lim_{J\to\infty}  E\left[ \sum_{j=1}^J  \left\langle w B\circ\Psi , e_j  \right\rangle^2 \right].
\end{equation}
Finally, the dominated convergence theorem yields that \eqref{s3ll} equals
\[E\left[ \left\| w B\circ\Psi \right\|^2 \right] - E\left[\sum_{j=1}^\infty  \left\langle w B\circ\Psi, e_j \right\rangle^2 \right] 
= 0. \]
This completes the proof.
\qed

\section{Application 1: Cram\'er--von Mises type goodness-of-fit test for drift parameters in diffusion processes}\label{sec:4}

In this section, we show the application of Theorem \ref{mainN} to the goodness-of-fit test for a diffusion process model.

\subsection{Problem setting and test procedure}
We consider a strictly stationary ergodic stochastic process $\{X_t \}_{t\geq0}$ which is a solution to a one-dimensional stochastic differential equation (SDE)
\begin{equation}
X_t = X_0 + \int_0^t S(X_{s}) ds + \int_0^t \sigma(X_{s}) dW_s \quad (t \geq 0),  \label{SDE}
\end{equation}
where the random variable $X_0$ is an almost surely finite initial value, $S(\cdot)$ is a measurable function in interest, $\sigma(\cdot)$ is a known measurable function and $t\leadsto W_t$ is a standard Wiener process defined on a stochastic basis $(\Omega, \F, (\F_t)_{t\in[0,\infty)}, P)$.
Let us list up some assumptions on the functions $S(\cdot)$ and $\sigma(\cdot)$.

{\bf (A1)} There exists a constant $ C> 0 $ such that
\[
|S(x)-S(y)| \leq C|x-y|, \quad
|\sigma(x)-\sigma(y)| \leq C|x-y| \quad 
(\forall x,y \in \R). 
\]

{\bf (A2)}
The process $ (X_{t})_{t \in [0,\infty)} $ is a solution to the SDE \eqref{SDE} for $ (S,\sigma) $ and it is stationary and ergodic with the absolutely continuous invariant law $ \mu_{S,\sigma} $ (that is, $ t^{-1}\int_{0}^tg(X_{s})ds \to^p \int_{\R}g(x)\mu_{S,\sigma}(dx) $ as $ t \to \infty $ for every $ \mu_{S,\sigma} $-integrable function $ g $). 
We also assume that
\[ \int_{\R}|x|^3 \mu_{S,\sigma}(dx) < \infty .\]

\begin{rem}
The assumption {\bf (A1)} implies that there exists a constant $ C'>0 $ such that $ |S(x)| \leq C'(1+|x|) $ and $ |\sigma(x)| \leq C'(1+|x|) $.
\end{rem}

In our problem, from the continuous stochastic process \eqref{SDE}, $\{ X_{t^n_i} \}_{i=1}^n$ is observed at discrete time points $0=t^n_0<t^n_1<\cdots<t^n_n$ satisfying
\begin{equation}
t_n^n \to \infty, \quad n \Delta_n^2 \to 0 \label{HFD}
\end{equation}
as $n\to\infty$, where 
\[ \Delta_n = \max_{1\leq i\leq n}|t^n_i - t^n_{i-1}|.\]

\begin{rem}
We propose an asymptotically distribution free tests based on the sampling scheme \eqref{HFD}, namely, {\rm high frequency data}.
We should mention that there is a huge literature on discrete time approximations of statistical estimators for diffusion processes; see, for example, the Introduction of \cite{RefGHR} for a review including not only high frequency cases but also low frequency cases. 
In our context of goodness-of-fit test, however, it seems difficult to obtain asymptotically distribution free results based on low frequency data. 
Our result for this problem is related to the preceding work, \cite{RefMNN}, who considered some Kolmogorov--Smirnov type tests based on smoothing. 
The ideal assertion for the Kolmogorov--Smirnov type tests is still an open problem because it needs a weak convergence theorem in $ \ell^\infty(\R) $. 
\end{rem}

Under the setting above, the problem is to conduct a goodness-of-fit test of \eqref{SDE}, that is to say, we wish to test the null hypothesis {\it$H_0:S=S_0$ versus $H_1:S\neq S_0$ for a given $S_0$ with $\sigma$ being a known function}.  
Let us define the test statistic
\begin{equation}
\mathcal{D}_n = \int_{\R} \frac{| U_n (x; S_0)|^2}{\Psi_{S_0,\sigma}(\infty)}  \frac{\Psi_{S_0,\sigma} (dx)}{\Psi_{S_0,\sigma}(\infty)} , \label{Ndef}
\end{equation}
where 
\[ 
x\leadsto U_n(x; S) = \frac{1}{\sqrt{t^n_n}} \sum_{i=1}^n 1_{(-\infty,x]}(X_{t^n_{i-1}})  \left\{ X_{t^n_{i}} - X_{t^n_{i-1}} - S(X_{t^n_{i-1}})(t^n_i - t^n_{i-1}) \right\},  \]
and
\[ \Psi_{S, \sigma}(\cdot) = \int_{-\infty}^\cdot \sigma(z)^2 \mu_{S,\sigma}(dz) . \]
As it is shown in the next subsection, the asymptotic null distribution of $\mathcal{D}_n$ is
\begin{equation}
\int_0^1 |B(u)|^2 du. \label{Nlim}
\end{equation}

\subsection{Justification of proposed procedure}
Let us asymptotically justify our test procedure.
Let us denote
\begin{equation}
m^n_i = \frac{\sigma(X_{t_{i-1}^n}) (W_{t_{i}^n}-W_{t_{i-1}^n})}{\sqrt{t_n^n}} \quad (i=1,\ldots,n), 
\label{mnid}
\end{equation} 
\[ \tilde{m}^n_i = \frac{X_{t^n_{i}} - X_{t^n_{i-1}} - S(X_{t^n_{i-1}})(t^n_i - t^n_{i-1}) }{\sqrt{t^n_n}} \quad (i=1,\ldots,n). \]
Suppose that $H_0$ is true.
Then, as it will be seen in the proof of Proposition \ref{dlem}, the sequence $\{\tilde{m}^n_i\}_{i=1}^n$ is close to $\{ m^n_i \}_{i=1}^n$ which is a martingale difference sequence with respect to the filtration $\{\F_{i-1}\}_{i=1}^\infty$, and Theorem \ref{mainN} yields the weak convergence in $L^2(\R, \Psi_{S_0,\sigma})$ of
\[ x \leadsto \sum_{i=1}^n 1_{(-\infty,x]}(X_{t^n_{i-1}}) m_i^n, \]
which will be denoted by $M^b_n(\cdot)$.

\begin{prop}\label{dlem}
Let $ \nu $ be any finite Borel measure on $ \R $. 
Assume {\bf (A1)} and {\bf (A2)}. 
Then, $ U_n(\cdot;S) $ converges weakly in $ L^2(\R,\nu) $ to $ B\circ \Psi_{S,\sigma}(\cdot)$ as $n\to\infty$ with \eqref{HFD}, where $ B(\cdot) $ is a standard Brownian motion and 
\[ \Psi_{S,\sigma}(\cdot)=\int_{-\infty}^{\cdot}\sigma(z)^2\mu_{S,\sigma}(dz) .\] 
\end{prop}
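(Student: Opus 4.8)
The plan is to compare $U_n(\cdot;S)$ with the idealized marked martingale process
\[ M^b_n(x)=\sum_{i=1}^n 1_{(-\infty,x]}(X_{t^n_{i-1}})\,m^n_i, \]
where $m^n_i$ is the martingale difference \eqref{mnid}, and to split the work into (a) proving $M^b_n\leadsto B\circ\Psi_{S,\sigma}$ in $L^2(\R,\nu)$ by verifying that $\{m^n_i\}$ satisfies Assumption \ref{AssN}, and (b) proving that the remainder $U_n(\cdot;S)-M^b_n$ vanishes in $L^2(\R,\nu)$-norm in probability. Since $L^2(\R,\nu)$ is a separable Hilbert space, weak convergence of $M^b_n$ together with $\|U_n(\cdot;S)-M^b_n\|\to^p0$ implies, by a Slutsky-type argument, the weak convergence of $U_n(\cdot;S)$ to the same limit $B\circ\Psi_{S,\sigma}$, which is the assertion.

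For (a) I take $\F^n_i=\F_{t^n_i}$, so that $X_{t^n_{i-1}}$ is $\F^n_{i-1}$-measurable and $\{m^n_i\}$ is an $\mathbf{F}^n$-martingale difference sequence with $E^n[(m^n_i)^2\mid\F^n_{i-1}]=\sigma(X_{t^n_{i-1}})^2(t^n_i-t^n_{i-1})/t^n_n$. Conditions (iii) and (iv) of Assumption \ref{AssN} are then immediate from stationarity: taking $\phi=\sigma^2$ and $c^n_i=(t^n_i-t^n_{i-1})/t^n_n$ gives $\sum_{i=1}^n c^n_i=1$ and $E[\phi(X_0)]=\int_\R\sigma^2\,d\mu_{S,\sigma}<\infty$ by \textbf{(A2)} and the linear growth of $\sigma$. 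Condition (ii) follows from $E^n[|W_{t^n_i}-W_{t^n_{i-1}}|^{2+\delta}\mid\F^n_{i-1}]=C_\delta(t^n_i-t^n_{i-1})^{(2+\delta)/2}$ and the bound $(t^n_i-t^n_{i-1})^{(2+\delta)/2}\le\Delta_n^{\delta/2}(t^n_i-t^n_{i-1})$, which upon summation yields a deterministic factor $(\Delta_n/t^n_n)^{\delta/2}\to0$ multiplying an ergodic average that is $O_p(1)$; choosing $\delta=1$ (which is precisely why \textbf{(A2)} requires a third moment) makes this tend to $0$ in probability.

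The core of (a) is condition (i), and I expect it to be the \emph{main obstacle}. The continuous-time ergodic theorem in \textbf{(A2)}, applied to the $\mu_{S,\sigma}$-integrable function $z\mapsto 1_{(-\infty,x]}(z)\sigma(z)^2$, gives $\frac1{t^n_n}\int_0^{t^n_n}1_{(-\infty,x]}(X_s)\sigma(X_s)^2\,ds\to^p\Psi_{S,\sigma}(x)$ for each fixed $x$, so what remains is to control the discretization error between this occupation-time average and the Riemann-type sum $\frac1{t^n_n}\sum_{i=1}^n 1_{(-\infty,x]}(X_{t^n_{i-1}})\sigma(X_{t^n_{i-1}})^2(t^n_i-t^n_{i-1})$. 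The difficulty is that the integrand jumps at $x$: the smooth factor $\sigma^2$ is absorbed by its local Lipschitz property together with the moment bound $E[(X_s-X_{t^n_{i-1}})^2]\le C\Delta_n$, but the discontinuity of $1_{(-\infty,x]}$ forces an argument such as
\[ |1_{(-\infty,x]}(X_{t^n_{i-1}})-1_{(-\infty,x]}(X_s)|\le 1_{(x-\rho,x+\rho]}(X_{t^n_{i-1}})+1_{\{|X_s-X_{t^n_{i-1}}|>\rho\}} \]
for a slowly vanishing $\rho>0$, whose first term is handled by the absolute continuity of $\mu_{S,\sigma}$ (so that $\mu_{S,\sigma}((x-\rho,x+\rho])\to0$) and whose second term is handled by Chebyshev's inequality and the SDE moment bounds (the bookkeeping here, with the $\sigma^2$ weight, being the delicate point). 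Granting (i), Theorem \ref{mainN} delivers $M^b_n\leadsto B\circ\Psi_{S,\sigma}$.

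For (b) I use \eqref{SDE} to write
\[ \tilde m^n_i-m^n_i=\frac1{\sqrt{t^n_n}}\int_{t^n_{i-1}}^{t^n_i}\big(S(X_s)-S(X_{t^n_{i-1}})\big)\,ds+\frac1{\sqrt{t^n_n}}\int_{t^n_{i-1}}^{t^n_i}\big(\sigma(X_s)-\sigma(X_{t^n_{i-1}})\big)\,dW_s \]
and split $U_n(\cdot;S)-M^b_n$ accordingly into a drift part and a diffusion part. Because $\|1_{(-\infty,\cdot]}(X_{t^n_{i-1}})\|^2\le\nu(\R)$, the drift part has $L^2(\R,\nu)$-norm at most $\sqrt{\nu(\R)}\sum_{i=1}^n|A^n_i|$ with $A^n_i=\frac1{\sqrt{t^n_n}}\int_{t^n_{i-1}}^{t^n_i}(S(X_s)-S(X_{t^n_{i-1}}))\,ds$; by \textbf{(A1)} and stationarity its expectation is $O((n\Delta_n^2)^{1/2})\to0$ under \eqref{HFD}. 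For the diffusion part, orthogonality of the martingale increments and the It\^o isometry give, with $B^n_i=\frac1{\sqrt{t^n_n}}\int_{t^n_{i-1}}^{t^n_i}(\sigma(X_s)-\sigma(X_{t^n_{i-1}}))\,dW_s$,
\[ E^n\Big\|\sum_{i=1}^n 1_{(-\infty,\cdot]}(X_{t^n_{i-1}})B^n_i\Big\|^2\le\nu(\R)\sum_{i=1}^n E^n[(B^n_i)^2]=O(\Delta_n)\to0. \]
Both parts therefore tend to $0$ in $L^2(\R,\nu)$-norm in probability, which establishes (b) and completes the proof.
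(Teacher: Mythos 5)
Your proposal is correct and follows the same skeleton as the paper's proof (decompose $U_n$ into the idealized marked martingale $M^b_n$ plus discretization remainders, apply Theorem \ref{mainN} to $M^b_n$, kill the remainders, conclude by Slutsky), but two of the technical sub-steps are executed by genuinely different means. First, for the stochastic-integral remainder $\sum_i 1_{(-\infty,\cdot]}(X_{t^n_{i-1}})B^n_i$ the paper does \emph{not} compute its norm directly: it applies Theorem \ref{mainN} a second time, to the array \eqref{nl31}, with limiting $\Psi\equiv 0$, and reads off weak convergence to the degenerate field; your route via orthogonality of martingale increments, the It\^o isometry and Lemma \ref{techlem}~(i) gives $E^n\|\cdot\|^2=O(\Delta_n)$ directly, which is more elementary and arguably more transparent. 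Second, and more substantively, for condition (i) of Assumption \ref{AssN} the paper isolates the difficulty into Lemma \ref{techlem}~(iii) and proves it by sandwiching $1_{(-\infty,x]}$ between Lipschitz functions $l\leq 1_{(-\infty,x]}\leq u$ with $\int|u-l|\sigma^2\,d\mu_{S,\sigma}<\varepsilon$ (possible by absolute continuity of the invariant law), so that the Riemann-sum-to-occupation-time conversion reduces to the smooth case of part (ii) with error $O_P(\Delta_n^{1/2})$; your band-plus-overshoot bound $1_{(x-\rho,x+\rho]}(X_{t^n_{i-1}})+1_{\{|X_s-X_{t^n_{i-1}}|>\rho\}}$ attacks the same discretization error head-on and does work, but the ``bookkeeping'' you flag is real: the overshoot term carries the weight $\sigma(X_s)^2$, and since \textbf{(A2)} only supplies a third moment you must use H\"older with exponents $(3,3/2)$ rather than Cauchy--Schwarz, giving a contribution of order $(\Delta_n/\rho^2)^{1/3}$, after which a choice such as $\rho=\Delta_n^{1/4}$ closes the argument. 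The Lipschitz-sandwich route buys you freedom from this $\rho$-calibration and packages the statement in a reusable form (the paper re-uses Lemma \ref{techlem}~(iii) for the consistency argument); your route is self-contained and avoids introducing the auxiliary approximating functions. Your verification of conditions (ii)--(iv) with $\phi=\sigma^2$, $c^n_i=(t^n_i-t^n_{i-1})/t^n_n$ and $\delta=1$ matches what the paper leaves implicit.
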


\begin{proof}
Define 
\[
x \leadsto M_n^a(x)=
\frac{1}{\sqrt{t_n^n}} \sum_{i=1}^n 1_{(-\infty,x]}(X_{t_{i-1}^n}) \int_{t_{i-1}^n}^{t_i^n}\sigma(X_{s})dW_{s}, 
\]
and
\[
x \leadsto M_n^b(x) =
\sum_{i=1}^n 1_{(-\infty,x]}(X_{t_{i-1}^n})  m^n_i,
\]
where $\{m^n_i\}_{i=1}^{n}$ is defined in \eqref{mnid}.

From \eqref{HFD}, it is easy to see that $ |U_n(\cdot;S)-M_n^a(\cdot)| $ converges in probability under the uniform metric, and thus also under the $ L^2(\R,\nu) $-metric. 

Let us show that $ M_n^a(\cdot)-M_n^b(\cdot) $ converges weakly in $ L^2(\R,\nu) $ to zero (the degenerate random field) and that $ M_n^b(\cdot) $ converges to $ B \circ \Psi_{S,\sigma} (\cdot) $;
then the assertion of the lemma follows from Slutsky's lemma.
To show these two weak convergence claims, we shall apply Theorem \ref{mainN} for 
\[ x \leadsto \sum_{i=1}^n \xi^n_i(x) \]
with
\begin{equation}
\xi_{i}^{n}(x) = \frac{1}{\sqrt{t_n^n}} 1_{(-\infty,x]}(X_{t_{i-1}^n}) \int_{t_{i-1}^n}^{t_i^n} (\sigma(X_{s})-\sigma(X_{t_{i-1}^n}))dW_s \quad (i=1,\ldots,n)
\label{nl31}
\end{equation}
and 
\begin{equation}
\xi_{i}^{n}(x) = 1_{(-\infty,x]}(X_{t_{i-1}^n}) m^n_i ,  \quad (i=1,\ldots,n)
\label{nl32}
\end{equation}
respectively. 
The condition (i) in Assumption \ref{AssN} for \eqref{nl31} where the limit is zero is clear, while that for \eqref{nl32} can be proven by using Lemma \ref{techlem} (iii). 
The condition (ii) in Assumption \ref{AssN} is indeed satisfied. 
The conditions (iii) and (iv) in Assumption \ref{AssN} is immediate from the stationarity (as for \eqref{nl31}, use also the latter inequality of Lemma \ref{techlem} (i)). 
This completes the proof.
\end{proof}

The limit random variable satisfies that 
\[
\int_{\R}
\frac{|B{(\Psi_{S,\sigma}(x)})|^2}{\Psi_{S,\sigma}(\infty)}
\frac{\Psi_{S,\sigma}(dx)}{\Psi_{S,\sigma}(\infty)}
=^d
\int_{0}^{1} |B(u)|^2du, 
\]
where the notation $=^d$ means the distributions are the same. 
So, by using the continuous mapping theorem, we obtain the following corollary. 

\begin{cor}\label{dthm}
Suppose that {\bf (A1)} and {\bf (A2)} are satisfied for a given, specific $ S_{0} $ and a known $ \sigma $. 
If $ H_{0} $ is true, then $\mathcal{D}_n$ converges in distribution to \eqref{Nlim} as $n\to\infty$ with \eqref{HFD}.
\end{cor}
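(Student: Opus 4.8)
The plan is to recognize $\mathcal{D}_n$ as a continuous functional of $U_n(\cdot;S_0)$ in a suitable $L^2$ space and then combine Proposition \ref{dlem} with the continuous mapping theorem. First I would set
$\nu(dx) = \Psi_{S_0,\sigma}(dx)/(\Psi_{S_0,\sigma}(\infty))^2$
and check that this is a finite Borel measure on $\R$. Indeed, {\bf (A1)} gives $|\sigma(x)|\leq C'(1+|x|)$, so $\sigma(\cdot)^2$ is $\mu_{S_0,\sigma}$-integrable by the moment condition in {\bf (A2)}, whence $\Psi_{S_0,\sigma}(\infty)=\int_\R\sigma(z)^2\mu_{S_0,\sigma}(dz)<\infty$ and $\nu(\R)=1/\Psi_{S_0,\sigma}(\infty)<\infty$. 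With this choice of $\nu$, the definition \eqref{Ndef} reads simply $\mathcal{D}_n=\|U_n(\cdot;S_0)\|^2$, the squared norm of $U_n(\cdot;S_0)$ in $L^2(\R,\nu)$.

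Next, since $H_0$ asserts $S=S_0$ and {\bf (A1)}, {\bf (A2)} are assumed for this specific $S_0$, Proposition \ref{dlem} applies with $S=S_0$ and with the finite measure $\nu$ just constructed, yielding the weak convergence $U_n(\cdot;S_0)\leadsto B\circ\Psi_{S_0,\sigma}(\cdot)$ in $L^2(\R,\nu)$ under \eqref{HFD}. The squared-norm functional $f\mapsto\|f\|^2$ is continuous on $L^2(\R,\nu)$, so the continuous mapping theorem shows that $\mathcal{D}_n$ converges in distribution to $\|B\circ\Psi_{S_0,\sigma}\|^2=\int_\R|B(\Psi_{S_0,\sigma}(x))|^2\,\nu(dx)$. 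This last quantity is precisely the left-hand side of the distributional identity displayed immediately above the statement, which is obtained by the substitution $v=\Psi_{S_0,\sigma}(x)$, the rescaling $u=v/\Psi_{S_0,\sigma}(\infty)$, and the Brownian scaling relation $B(\Psi_{S_0,\sigma}(\infty)\,u)=^d\sqrt{\Psi_{S_0,\sigma}(\infty)}\,B(u)$, and it identifies the limit with $\int_0^1|B(u)|^2\,du$, namely \eqref{Nlim}. Chaining these three facts completes the argument.

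I do not expect a genuine obstacle here, as the analytic content has been front-loaded into Proposition \ref{dlem}; the only items requiring (routine) verification are the finiteness of $\nu$ and the continuity of the squared-norm functional, both immediate. The one point worth emphasizing is conceptual rather than technical: the weak convergence in Proposition \ref{dlem} takes place in the \emph{Hilbert space} $L^2(\R,\nu)$, not in $\ell^\infty(\R)$ or a Skorokhod space, and it is exactly this Hilbert-space formulation that makes the Cram\'er--von Mises map $f\mapsto\|f\|^2$ a legitimate continuous functional to which the continuous mapping theorem may be applied.
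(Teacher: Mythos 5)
Your proposal is correct and follows essentially the same route as the paper: Proposition \ref{dlem} applied with $S=S_0$ and the finite measure $\nu = \Psi_{S_0,\sigma}(\cdot)/(\Psi_{S_0,\sigma}(\infty))^2$, the continuous mapping theorem for the squared $L^2$-norm, and the time-change/Brownian-scaling identity reducing the limit to $\int_0^1|B(u)|^2\,du$. The only difference is that you spell out the finiteness of $\nu$ and the scaling computation, which the paper leaves implicit.
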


To close this subsection, let us mention the consistency of the test.
Let us write the alternative hypothesis in interest as
\begin{equation} \label{alt}
\int_{\R} \left|\int_{-\infty}^{x}\{ S_{0}(z)- S(z) \} \mu_{S,\sigma}(dz) \right|^2 \Psi_{S_{0},\sigma}(dx) >0.
\end{equation}
Hereafter, \eqref{alt} is assumed to be true.
Observe that 
\begin{eqnarray*}
\lefteqn{ \Psi_{S_0,\sigma}(\infty) \mathcal{D}_n^{1/2}  }\\
&=& \left( \int_{\R}|U_n(x;S_{0})|^2 \Psi_{S_{0},\sigma}(dx) \right)^{1/2} \nonumber
\\
&\geq& 
\sqrt{t_n^n} \left( \int_{\R} \left| H_n(x) \right|^2 \Psi_{S_{0},\sigma}(dx) \right)^{1/2} 
- \left( \int_{\R}|U_n(x;S)|^2 \Psi_{S_{0},\sigma}(dx) \right)^{1/2}, 
\end{eqnarray*}
where 
\[H_n(\cdot)=\frac{1}{t_n^n}\sum_{i=1}^n 1_{(-\infty, \cdot]}(X_{t_{i-1}^n})\{ S_0(X_{t_{i-1}^n})- S(X_{t_{i-1}^n}) \}
|t_i^n-t_{i-1}^n|.\]
By using Proposition \ref{dlem} and the continuous mapping theorem, the second term of the right-hand side is $ O_{P}(1) $. 
To prove that the probability that the first term is bounded by $ M $ tends to zero as $ n \to \infty $ for any $ M> 0 $, let us first see that 
\[
H_n(x)
\to^p \int_{-\infty}^{x}\{ S_{0}(z)- S(z) \} \mu_{S,\sigma}(dz)
\]
for every $x \in \R$ which follows from Lemma \ref{techlem} (iii) presented in the next subsection.
It is easy to show that this convergence holds uniformly in $ x $. 
Hence
\[
 \int_{\R} \left| H_n(x) \right|^2 \Psi_{S_{0},\sigma}(dx)  \to^p
\int_{\R}
\left|\int_{-\infty}^{x}\{ S_{0}(z)- S(z) \} \mu_{S,\sigma}(dz) \right|^2 \Psi_{S_{0},\sigma}(dx) >0.
\]
Therefore, it holds that $ P(\mathcal{D}_n > M ) = P(\Psi_{S_0,\sigma}(\infty) \mathcal{D}_n^{1/2} > \Psi_{S_0,\sigma}(\infty) M^{1/2} ) \to 1$ for any constant $ M> 0 $.

\subsection{A technical lemma}

In this subsection, we show the following lemma which has already been used. 

\begin{lem}\label{techlem}
Let $ X $ be a solution of the SDE (\ref{SDE}) 
with $ (S,\sigma) $ satisfying {\bf (A1)}. 
Let $ p $ be a positive integer, and assume $ \sup_{t \in [0,\infty)}E|X_t|^{p} < \infty $. 

(i) There exists a constant $ C_{p,S,\sigma}>0 $ depending only on $ p $, $ (S,\sigma) $ such that if $ |t_{i}-t_{i-1}| \leq 1 $ then 
\[
E\left[\left. \sup_{s \in [t_{i-1}^n,t_{i}^n]} |X_{s}-X_{t_{i-1}^n}|^p \right| \F_{t_{i-1}^n} \right] 
\leq C_{p,S,\sigma}|t_{i}^n-t_{i-1}^n|^{p/2} (1+|X_{t_{i-1}^n}|)^p, 
\]
\[
E\left[\left. \sup_{s \in [t_{i-1}^n,t_{i}^n]}|X_{s}|^p \right| \F_{t_{i-1}^n} \right] 
\leq C_{p,S,\sigma}(1+|X_{t_{i-1}^n}|)^p. 
\]

(ii) 
For given $ p $ Lipschitz continuous functions $ {\bf g}=(g_1,...,g_p) $, there exists a constant $ C_{p,{\bf g},S,\sigma}>0 $ depending also on $ (S,\sigma) $ such that if $ |t_{i}^n - t_{i-1}^n| \leq 1 $ then 
\begin{eqnarray*}
\lefteqn{
E\left[\sup_{s \in [t_{i-1}^n,t_{i}^n]} \left|\prod_{j=1}^pg_j(X_{s}) -\prod_{j=1}^pg_j(X_{t_{i-1}^n})\right| \F_{t_{i-1}^n} \right]
}\\
&\leq& C_{p,{\bf g},S,\sigma}|t_{i}^n-t_{i-1}^n|^{1/2} (1+|X_{t_{i-1}^n}|)^p. 
\end{eqnarray*}

(iii) 
Assume that $ X $ is ergodic with the absolutely 
continuous invariant distribution $ \mu $. 
Let $ x \in \R $ and 
$ p-1 $ Lipschitz continuous functions $ {\bf g}=(g_1,...,g_{p-1}) $ 
such that that $ \prod_{j=1}^{p-1}g_j $ is $ \mu $-integrable be given. 
If $ \Delta_n \to 0 $ then it holds that 
\[
\frac{1}{t_n^n}\sum_{i=1}^n
1_{(-\infty,x]}(X_{t_{i-1}^n})
\prod_{j=1}^{p-1}g_j(X_{t_{i-1}^n})
|t_{i}^n-t_{i-1}^n|
\to^p \int_{-\infty}^{x}
\prod_{j=1}^{p-1}g_j(z)
\mu(dz). 
\]
(This assertion is true also for $ p=1 $ if we read 
$ \prod_{j=1}^{1-1}g_{j} \equiv 1 $.) 
\end{lem}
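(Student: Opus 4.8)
The plan is to establish the three parts in order, since part (ii) rests on part (i) and part (iii) rests on part (ii), with the only genuinely new difficulty concentrated in part (iii). For part (i) I would use the classical localized moment estimates for \eqref{SDE}. Writing $X_s-X_{t_{i-1}^n}=\int_{t_{i-1}^n}^s S(X_u)\,du+\int_{t_{i-1}^n}^s\sigma(X_u)\,dW_u$ and applying the Burkholder--Davis--Gundy inequality to the stochastic integral together with H\"older's inequality to the drift, the linear growth bounds $|S(x)|\le C'(1+|x|)$ and $|\sigma(x)|\le C'(1+|x|)$ coming from {\bf (A1)} reduce everything to controlling $E[\sup_u(1+|X_u|)^p\mid\F_{t_{i-1}^n}]$. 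A conditional Gronwall argument yields the second displayed inequality, and substituting it back into the first produces the factor $|t_i^n-t_{i-1}^n|^{p/2}$; here $|t_i^n-t_{i-1}^n|\le1$ is used so that the drift contribution of order $|t_i^n-t_{i-1}^n|^p$ is absorbed by the diffusion contribution of order $|t_i^n-t_{i-1}^n|^{p/2}$. The hypothesis $\sup_tE|X_t|^p<\infty$ guarantees these conditional expectations are finite.

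For part (ii) I would telescope $\prod_{j=1}^pa_j-\prod_{j=1}^pb_j=\sum_{k=1}^p\bigl(\prod_{j<k}a_j\bigr)(a_k-b_k)\bigl(\prod_{j>k}b_j\bigr)$ with $a_j=g_j(X_s)$ and $b_j=g_j(X_{t_{i-1}^n})$. Lipschitz continuity gives $|a_k-b_k|\le L_k|X_s-X_{t_{i-1}^n}|$, while the surviving factors obey $|g_j(y)|\le C(1+|y|)$, so each term is at most $C(1+|X_{t_{i-1}^n}|)^{p-k}|X_s-X_{t_{i-1}^n}|(1+|X_s|)^{k-1}$. After the elementary bound $(1+|X_s|)^{k-1}\le C\bigl((1+|X_{t_{i-1}^n}|)^{k-1}+|X_s-X_{t_{i-1}^n}|^{k-1}\bigr)$, taking the conditional supremum and invoking the first inequality of part (i) with exponents $1$ and $k$ (all available since $\sup_tE|X_t|^p<\infty$ forces the lower moments) collapses every term to order $|t_i^n-t_{i-1}^n|^{1/2}(1+|X_{t_{i-1}^n}|)^p$, as claimed.

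Part (iii) carries the real content. Writing $g=\prod_{j=1}^{p-1}g_j$, I would compare the stated sum with the time integral $(t_n^n)^{-1}\int_0^{t_n^n}1_{(-\infty,x]}(X_s)g(X_s)\,ds$, whose limit is $\int_{-\infty}^xg\,d\mu$ in probability by the ergodicity in {\bf (A2)}. The obstruction is that $1_{(-\infty,x]}$ is not Lipschitz, so part (ii) does not apply to it directly; I would remove this by a sandwiching argument. For $\varepsilon>0$ pick a Lipschitz ramp $f_\varepsilon$ with $|1_{(-\infty,x]}-f_\varepsilon|\le 1_{[x-\varepsilon,x+\varepsilon]}$. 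Then $f_\varepsilon g$ is a product of $p$ Lipschitz functions, so part (ii) controls its discretization error at rate $C_\varepsilon\Delta_n^{1/2}\to0$ and {\bf (A2)} gives convergence of the smoothed sum to $\int f_\varepsilon g\,d\mu$. The replacement error is dominated by $S_n^\varepsilon:=(t_n^n)^{-1}\sum_i1_{[x-\varepsilon,x+\varepsilon]}(X_{t_{i-1}^n})|g(X_{t_{i-1}^n})|\,|t_i^n-t_{i-1}^n|$, and here I would exploit strict stationarity: since each $X_{t_{i-1}^n}$ has law $\mu$ and $\sum_i|t_i^n-t_{i-1}^n|=t_n^n$, one gets the exact identity $E[S_n^\varepsilon]=\int_{[x-\varepsilon,x+\varepsilon]}|g|\,d\mu$ for every $n$, which tends to $0$ as $\varepsilon\to0$ by absolute continuity of $\mu$ (so that $\mu(\{x\})=0$) and dominated convergence. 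A standard three-term estimate---fix $\varepsilon$ small to make both $E[S_n^\varepsilon]$ and $|\int f_\varepsilon g\,d\mu-\int_{-\infty}^xg\,d\mu|$ small, then let $n\to\infty$ for that fixed $\varepsilon$---closes the argument through Markov's inequality.

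The main obstacle is precisely this uniform-in-$n$ control of $S_n^\varepsilon$, the only place where the discontinuity of the indicator bites. Stationarity makes it painless by turning a path-dependent quantity into a single integral against $\mu$; without it one would instead have to estimate the chance that the path crosses level $x$ inside a mesh interval, via part (i) and the bound $P\bigl(\sup_s|X_s-X_{t_{i-1}^n}|\ge|X_{t_{i-1}^n}-x|\,\big|\,\F_{t_{i-1}^n}\bigr)\le C|t_i^n-t_{i-1}^n|^{p/2}(1+|X_{t_{i-1}^n}|)^p|X_{t_{i-1}^n}-x|^{-p}$, which is considerably more delicate.
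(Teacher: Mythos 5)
Your proposal is correct, and for parts (i) and (ii) it fills in exactly what the paper leaves implicit: the paper simply cites Kessler (1997) for (i) and says (ii) "can be proven by using (i)", which is the BDG/Gronwall and telescoping argument you spell out. For part (iii) the overall strategy is the same (approximate the indicator by Lipschitz functions, use (ii) to reduce the Riemann sum to the time integral at rate $O_P(\Delta_n^{1/2})$, then invoke ergodicity), but you handle the approximation error differently. The paper reduces to nonnegative $g$ and uses a monotone sandwich $l\le 1_{(-\infty,x]}\le u$ with $\int|u-l|g\,d\mu<\varepsilon$, so that the original sum is trapped between two smoothed sums, each of which converges by ergodicity alone; no distributional information about the marginals at the sampling times is needed. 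You instead use a single ramp $f_\varepsilon$ and control the replacement error $S_n^\varepsilon$ through the exact identity $E[S_n^\varepsilon]=\int_{[x-\varepsilon,x+\varepsilon]}|g|\,d\mu$, which avoids the reduction to nonnegative $g$ but requires strict stationarity of the sampled marginals. That hypothesis is available in the paper's setting (assumption \textbf{(A2)} posits a stationary solution), though it is not literally among the hypotheses of part (iii), which only assume ergodicity with invariant law $\mu$; if you want the lemma at its stated level of generality you should either add stationarity explicitly or switch to the paper's sandwich, which needs only $\mu$-integrability of $g$ and absolute continuity of $\mu$ to produce the envelopes $l,u$. Both routes are sound.
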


\begin{proof}
The assertion (i) is well-known; see, for example, \cite{RefKe}.
The assertion (ii) can be proven by using (i). 
Let us show (iii). 
We write $ g(z)=\prod_{j=1}^{p-1}g_{j}(z) $. 
We may assume that all $ g_j $'s are nonnegative without loss of generality. 
(For the general case, notice that $ g $ is represented as the sum of some terms of the form $ a \prod_{j=1}^{p-1}\widetilde{g}_j $ where  $ \widetilde{g}_j= g_j\vee0 $ or $(-g_j)\vee0 $ and $ a=1 $ or $ -1 $.) 
For any $ \varepsilon > 0 $, choose two Lipschitz continuous functions $ l,u $ such that $ l \leq 1_{(-\infty,x]} \leq u $ and that $ \int_{\R}|u(z)-l(z)|g(z)\mu(dz) < \varepsilon $. 
Then it holds that 
\begin{eqnarray*}
\lefteqn{
\frac{1}{t_n^n}\sum_{i=1}^n
1_{(-\infty,x]}(X_{t_{i-1}^n})g(X_{t_{i-1}^n})
|t_{i}^n-t_{i-1}^n|
}
\\
&\leq&
\frac{1}{t_n^n}\sum_{i=1}^n
u(X_{t_{i-1}^n})g(X_{t_{i-1}^n})
|t_{i}^n-t_{i-1}^n|
\\
&=&
\frac{1}{t_n^n}
\int_{0}^{t_n^n}
u(X_{s})g(X_{s})ds
+O_P(\Delta_n^{1/2})
\\
&\to^p&
\int_{\R}
u(z)g(z)\mu(dz)
\\
&\leq&
\int_{-\infty}^{x}
g(z)\mu(dz)
+ \varepsilon. 
\end{eqnarray*}
By doing the same argument replacing $ u $ by $ l $ we finally get 
\[
\left|
\frac{1}{t_n^n}\sum_{i=1}^n
1_{(-\infty,x]}(X_{t_{i-1}^n})g(X_{t_{i-1}^n})
|t_{i}^n-t_{i-1}^n|
-\int_{-\infty}^{x}
g(z)\mu(dz)
\right|
\leq
\varepsilon + o_{P}(1). 
\]
Since the choice of $ \varepsilon $ is arbitrary, we have proven the assertion of (iii). 
This completes the proof.
\end{proof}

\section{Application 2: Anderson--Darling type goodness-of-fit test for nonlinear time series}\label{sec:5}

In this section, we show the application of Theorem \ref{main} to the goodness-of-fit test for a Markovian nonlinear time series model.

\subsection{Problem setting and test procedure}

We consider a strictly stationary ergodic stochastic process $\{X_i\}_{i=-\infty}^\infty$ given by
\begin{equation}
X_i = S(X_{i-1}) + \sigma(X_{i-1}) \ve_{i} \quad (i \in \Z),  \label{NLTS}
\end{equation}
where $S(\cdot)$ is a measurable function, $\sigma(\cdot)$ is a known measurable function satisfying $\inf_{x\in\R} \sigma(x) > 0$, and $\{ \ve_i \}_{i=-\infty}^\infty$ is an unobserved iid sequence of absolutely continuous random variables satisfying $P(\ve_1 \leq 0 ) = 1/2$ and $\ve_i$ is independent of $X_{i-1}$ for all $i\in\Z$.
In this section, no moment condition on $\ve_1$ is assumed.

Let us introduce the following assumption on $S(\cdot)$ and $\sigma(\cdot)$.

{\bf (B)}
The process $\{ X_i \}_{i=-\infty}^\infty$ is stationary and ergodic with the absolutely continuous invariant law $\mu_{S,\sigma}$, where the ergodicity is in the sense of the almost sure convergence, that is to say,
\[ \frac{1}{n}\sum_{i=1}^n g(X_i) \to \int_\R g(x) \mu_{S,\sigma}(dx) \quad {\rm a.s.} \]
for every $\mu_{S,\sigma}$-integrable function $g (\cdot)$.
Moreover, the distribution function $\Psi_{S,\sigma}$ of $\mu_{S,\sigma}$ satisfies
\[ \int_\R \frac{\mu_{S,\sigma}(dx)}{\sqrt{\Psi_{S,\sigma}(x)}}  < \infty. \]

In our problem, from the stochastic process \eqref{NLTS}, a time series $\{ X_i \}_{i=0}^n$ is observed.

Under the setting above, the problem is to conduct a goodness-of-fit test of \eqref{NLTS}, that is to say, we wish to test the null hypothesis {\it$H_0:S=S_0$ versus $H_1:S\neq S_0$ for a given $S_0$ with $\sigma$ being a known function}.  
Let us define the test statistic
\begin{equation}
\mathcal{T}_n = \int_{\R} \frac{1}{n \Psi_{S_0,\sigma}(x)}  \left( \sum_{i=1}^n \sgn(X_i - S_0(X_{i-1})) 1_{(-\infty,x]}(X_{i-1}) \right)^2 \mu_{S_0,\sigma} (dx) , \label{Tdef}
\end{equation}
where $\sgn(\cdot)=-1_{(-\infty, 0)}(\cdot) + 1_{(0,\infty)}(\cdot)$.
As it is shown in the next subsection, the asymptotic null distribution of $\mathcal{T}_n$ is
\begin{equation}
\int_0^1 \frac{|B(u)|^2}{u} du . \label{Tlim}
\end{equation}

\begin{rem}
Our statistic contains $\sgn(\cdot)$ along the lines of \cite{RefEr} and Section 7.3 of \cite{RefN}.
Of course, if the corresponding required condition on $\{\ve_i\}_{i=1}^\infty$ is satisfied, other functions mentioned in \cite{RefKS} can be used.
Some examples are $f(\cdot)=\cdot$, $f(\cdot)=1_{(0,\infty)}(\cdot)-(1-\alpha)$, and other bounded functions.
A merit of $f(\cdot)=\sgn(\cdot)$ is robustness against outliers.
\end{rem}

\begin{rem}
Our procedure can be regarded as an Anderson--Darling type statistic in the sense of
\[ E\left[ \int_0^1 \frac{|B(u)|^2}{u} du \right] = 1. \]
\end{rem}

\subsection{Justification of proposed procedure}
Let us asymptotically justify our test procedure by using Theorem \ref{main}.
Let
\[ x \leadsto \xi^n_i(x) = \frac{1}{\sqrt{\Psi_{S_0,\sigma}(x)}} 1_{(-\infty,x]}(X_{i-1}) m^n_i \quad (i=1,\ldots,n), \]
where
\[ m^n_i = \frac{\sgn(X_i- S_0(X_{i-1}))}{\sqrt{n}}  \quad (i=1,\ldots,n).\]
Suppose that $H_0$ is true.
Then $\{m^n_i\}_{i=1}^n$ is a martingale difference sequence with respect to the filtration $\{\F_{i}\}_{i=0}^n$ where $\F_{i}=\sigma\{ X_j: 0 \leq j \leq i \}$ for $i=1,\ldots,n$, and it holds that
\[ (m^n_i)^2 = \frac{1}{n} \quad {\rm a.s.} \quad (i=1,\ldots,n). \]
We will use Theorem \ref{main} with $w=(\Psi_{S_0,\sigma})^{-1/2}$.
From the stationarity and ergodicity, Assumption \ref{Ass2} can be checked.
Indeed, it holds that
\[ E^n \left[ \frac{1}{n} \sum_{i=1}^n 1_{(-\infty,x]}(X_{i-1}^n) \right] 
= E \left[ \frac{1}{n} \sum_{i=1}^n 1_{(-\infty,x]}(\zeta) \right] 
= \Psi_{S_0, \sigma} (x) \]
where $\zeta$ is a random variable following $\mu_{S_0,\sigma}$, so if {\bf (B)} is satisfied then we are able to check Assumption \ref{Ass2} by taking $\Psi=\Phi=\Lambda=\Psi_{S_0,\sigma}$ and $\delta=1$.
We thus have
\[ \sum_{i=1}^n \xi^n_i \Rightarrow \frac{ B\circ \Psi_{S_0,\sigma} }{\sqrt{\Psi_{S_0,\sigma}}} \quad {\rm in} \quad L^2(\R,\nu) \]
as $n\to\infty$ for any finite Borel measure $\nu$.
Therefore, the continuous mapping theorem and 
\[ \int_\R \frac{|B(\Psi_{S_0,\sigma}(x))|^2}{\Psi_{S_0,\sigma}(x)} \mu_{S_0,\sigma}(dx)
= \int_\R \frac{|B(\Psi_{S_0,\sigma}(x))|^2}{\Psi_{S_0,\sigma}(x)} \frac{\mu_{S_0,\sigma}(dx)}{\Psi_{S_0, \sigma}(\infty)}
=^d  \int_0^1 \frac{|B(u)|^2}{u} du \]
yield the following assertion.

\begin{prop}
Suppose that {\bf (B)} is satisfied for a given, specific $ S_{0} $ and a known $ \sigma $. 
If $H_0$ is true, then $\mathcal{T}_n$ defined in \eqref{Tdef} converges in distribution to \eqref{Tlim} as $n\to\infty$.
\end{prop}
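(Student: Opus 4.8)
The plan is to recognize $\mathcal{T}_n$ as the squared $L^2(\R,\mu_{S_0,\sigma})$-norm of the weighted marked empirical process and then to combine Theorem~\ref{main} with the continuous mapping theorem. Taking $w=(\Psi_{S_0,\sigma})^{-1/2}$ and $\nu=\mu_{S_0,\sigma}$, a direct expansion of \eqref{Tdef} shows $\mathcal{T}_n=\|Z_n^w\|^2$, where $Z_n^w=\sum_{i=1}^n\xi^n_i$ with $\xi^n_i(x)=w(x)1_{(-\infty,x]}(X_{i-1})m^n_i$ and $m^n_i=\sgn(X_i-S_0(X_{i-1}))/\sqrt{n}$; since $f\mapsto\|f\|^2$ is continuous on the Hilbert space, it suffices to establish the weak convergence of $Z_n^w$ in $L^2(\R,\mu_{S_0,\sigma})$ and then identify the law of the limit.

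First I would verify the martingale difference structure under $H_0$. Under the null $X_i-S_0(X_{i-1})=\sigma(X_{i-1})\ve_i$, and as $\sigma>0$ this gives $\sgn(X_i-S_0(X_{i-1}))=\sgn(\ve_i)$; because $\ve_i$ is independent of $\F_{i-1}$ with $P(\ve_1\le0)=1/2$, we obtain $E^n[m^n_i\mid\F_{i-1}]=0$ and, crucially, $(m^n_i)^2=1/n$ almost surely. This deterministic conditional variance makes Assumption~\ref{Ass2} transparent: one has $\sum_{i=1}^n1_{(-\infty,x]}(X_{i-1})E^n[(m^n_i)^2\mid\F_{i-1}]=n^{-1}\sum_{i=1}^n1_{(-\infty,x]}(X_{i-1})$, a bounded ergodic average whose mean equals $\Psi_{S_0,\sigma}(x)$ exactly by stationarity, so I would take $\Psi=\Phi=\Lambda=\Psi_{S_0,\sigma}$ and $\delta=1$. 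Then \eqref{assE} follows from the almost sure ergodic convergence in {\bf (B)} upgraded to $L^1$ by boundedness, while the Lyapunov part reduces, since $|m^n_i|^{2+\delta}=n^{-3/2}$, to $n^{-1/2}\Psi_{S_0,\sigma}(x)\to0$, dominated by $\Lambda(x)=\Psi_{S_0,\sigma}(x)$.

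The step I expect to be the real obstacle is the integrability half of Assumption~\ref{Ass2}, because the weight $w=(\Psi_{S_0,\sigma})^{-1/2}$ blows up as $x\to-\infty$. The first integrability condition, $\int_\R(\Psi+\Phi)w^2\,d\nu=\int_\R2\Psi_{S_0,\sigma}\cdot\Psi_{S_0,\sigma}^{-1}\,d\mu_{S_0,\sigma}=2$, is automatic, but the borderline Lyapunov integrability $\int_\R\Lambda w^{2+\delta}\,d\nu=\int_\R\Psi_{S_0,\sigma}^{-1/2}\,d\mu_{S_0,\sigma}$ is finite precisely because of the hypothesis $\int_\R\Psi_{S_0,\sigma}^{-1/2}\,d\mu_{S_0,\sigma}<\infty$ built into {\bf (B)}; this is the genuine constraint on the model. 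With Assumption~\ref{Ass2} in hand, Theorem~\ref{main} gives $Z_n^w\Rightarrow(\Psi_{S_0,\sigma})^{-1/2}\,B\circ\Psi_{S_0,\sigma}$ in $L^2(\R,\mu_{S_0,\sigma})$, and the continuous mapping theorem applied to $\|\cdot\|^2$ yields $\mathcal{T}_n\Rightarrow\int_\R|B(\Psi_{S_0,\sigma}(x))|^2/\Psi_{S_0,\sigma}(x)\,\mu_{S_0,\sigma}(dx)$. Finally, since $\Psi_{S_0,\sigma}$ is a genuine distribution function with $\Psi_{S_0,\sigma}(\infty)=1$, the substitution $u=\Psi_{S_0,\sigma}(x)$ combined with the scaling invariance of Brownian motion identifies this limit in law with $\int_0^1|B(u)|^2/u\,du$, namely \eqref{Tlim}.
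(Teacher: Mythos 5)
Your proposal is correct and follows essentially the same route as the paper: apply Theorem~\ref{main} with $w=(\Psi_{S_0,\sigma})^{-1/2}$, $\nu=\mu_{S_0,\sigma}$ and $\Psi=\Phi=\Lambda=\Psi_{S_0,\sigma}$, $\delta=1$, then conclude by the continuous mapping theorem and the change of variables $u=\Psi_{S_0,\sigma}(x)$. You in fact spell out the verification of Assumption~\ref{Ass2} (in particular the role of the hypothesis $\int_\R \Psi_{S_0,\sigma}^{-1/2}\,d\mu_{S_0,\sigma}<\infty$ in \textbf{(B)} for the borderline Lyapunov integrability) in more detail than the paper does.
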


\begin{rem}
The weak convergence of 
\[ \sum_{i=1}^n \frac{1}{\sqrt{\hat\Psi_{n} (\cdot)}} 1_{(-\infty,\cdot]}(X_{i-1}) m^n_i \]
in $L^2(\R,\nu)$ is not demonstrated in this paper, where
\[ \hat\Psi_{n} (\cdot) = \frac{1}{n} \sum_{i=1}^n 1_{(-\infty,\cdot]}(X^n_{i-1}). \]
\end{rem}

Finally, we briefly discuss the consistency of the test.
Let us write the hypothesis in interest as
\begin{equation}
P(X_i - S_0(X_{i-1}) \leq 0 | \F_{i-1}) = \frac{1}{2} - \delta \quad {\rm a.s.} \quad (i=1,\ldots,n).
\end{equation}
Then the null hypothesis is $\delta=0$ and the alternative hypothesis is $0<|\delta|<1/2$.
Hereafter, $0<|\delta|<1/2$ is assumed.
From $\Psi_{S_0,\sigma} (x)\leq \Psi_{S_0,\sigma} (\infty)=1$, it follows that
\[ 
 \mathcal{T}_n ^{1/2}
\geq 
\left\{
\int_{\R} \frac{1}{n}  \left( \sum_{i=1}^n \sgn(X_i- S_0(X_{i-1})) 1_{(-\infty,x]}(X_{i-1}) \right)^2 \mu_{S_0 ,\sigma}  (dx)  \right\}^{1/2}. 
\]
The right-hand side of the above display is bounded below by
\begin{eqnarray*}
\lefteqn{
\sqrt{n} \times \left\{ 4\delta^2 \int_{\R} \left( \frac{1}{n} \sum_{i=1}^n 1_{(-\infty,x]}(X_{i-1}) \right)^2 \mu_{S_0,\sigma}  (dx) \right\}^{1/2} }\\
&& - \left\{ \int_{\R}  \left(  \sum_{i=1}^n  1_{(-\infty,x]}(X_{i-1}) \check{m}^n_i \right)^2 \mu_{S_0,\sigma}  (dx) \right\}^{1/2},
\end{eqnarray*}
where 
\[\check{m}^n_i= \frac{1}{\sqrt{n}} \left( \sgn(X_i-S_0(X_{i-1})) - 2\delta \right) \quad (i=1,\ldots,n) .  \]
The first term tends to positive infinity in probability since\[ \int_{\R} \left( \frac{1}{n} \sum_{i=1}^n 1_{(-\infty,x]}(X_{i-1}) \right)^2 \mu_{S_0,\sigma}  (dx) \to^p \int_\R (\Psi_{S,\sigma}(x))^2 \mu_{S_0,\sigma} (dx) \]
which follows from the ergodicity, whereas the second term is $O_P(1)$ which is a consequence of Theorem \ref{main} since 
$\left\{ \check{m}^n_i \right\}_{i=1}^\infty$
is a martingale difference sequence with respect to the filtration $\{\F_{i}\}_{i=0}^\infty$.
Therefore, it holds that $ P(\mathcal{T}_n > M ) =P(\mathcal{T}_n^{1/2} > M^{1/2} ) \to 1 $ for any constant $ M> 0 $.

\section*{Acknowledgements}
This work was partly supported by Japan Society for the Promotion of Science KAKENHI Grant Number 16H02791(KT), 18K13454(KT), 15K00062(YN) and 18K11203(YN).

\end{document}